\documentclass[12pt]{article}
\usepackage{amsmath}
\usepackage{amssymb, amsthm}
\usepackage{geometry}
\usepackage{hyperref}
\newtheorem{theorem}{Theorem}
\newtheorem{remark}{Remark}

\newtheorem{proposition}{Proposition}
\newtheorem{conjecture}{Conjecture}
\newtheorem{openproblem}{Open Problem}
\newtheorem{lemma}{Lemma}

\title{On the Properties of a Bernoulli--Uniform Boolean Convolution Family}
\author{
	Sukrit Chakraborty\thanks{Department of Mathematics, Achhruram memorial College, Jhalda, Purulia, West Bengal, India, \texttt{sukrit049@gmail.com}\\
		\textbf{Keywords:} Boolean additive convolution; Boolean cumulants; reciprocal Cauchy transform;
		Bernoulli distribution; uniform distribution; scaling limits; moment asymptotics.\\
		\textbf{2020 Mathematics Subject Classification:} Primary 46L54; Secondary 60E07, 60B10.}
}
\date{}

\begin{document}
	
	\maketitle
	
	\begin{abstract}
		We introduce and study a one-parameter family of probability measures
		$\{\mu_t\}_{t>0}$ defined by the Boolean additive convolution
		$\mu_t=\beta_t\uplus U_t$, where $\beta_t$ is the symmetric Bernoulli
		distribution with variance $t$ and $U_t$ is the uniform distribution on
		$[-t/2,t/2]$. Exploiting the linearization property of the Boolean energy
		transform, we derive an explicit formula for the reciprocal Cauchy transform
		and obtain its Laurent expansion. As
		applications, we compute the Boolean cumulants and moment sequence of
		$\mu_t$, establish asymptotic estimates for the moments, and investigate
		analytic properties including support criteria and a differential equation
		satisfied by the Boolean energy transform. We further analyze scaling limits,
		prove stability under suitable normalizations, and show that the family does
		not form a Boolean convolution semigroup.
	\end{abstract}
	\section*{Introduction}
	
	Since its introduction by Speicher and Woroudi \cite{SpeicherWoroudi1997},
	Boolean independence has become one of the fundamental notions of
	noncommutative independence, alongside classical, free and monotone
	independence. The associated Boolean additive convolution, denoted by
	$\uplus$, possesses a remarkably simple analytic structure through the
	Boolean energy transform
	\[
	K_{\mu}(z)=z-F_{\mu}(z),
	\]
	where $F_{\mu}=1/G_{\mu}$ is the reciprocal Cauchy transform of a probability
	measure $\mu$. This linearization property has led to substantial progress in
	the study of infinite divisibility, stable laws, convolution semigroups and
	limit theorems in Boolean probability; see, for example,
	\cite{Franz2009,Anshelevich2009,ArizmendiHasebe2013}.
	
	One of the central themes in noncommutative probability is the explicit
	construction and analysis of probability distributions possessing tractable
	analytic transforms. Such families often serve as canonical examples,
	illustrate general phenomena, and provide testing grounds for conjectures in
	free, Boolean and monotone probability. Classical examples include the
	semicircle distribution in free probability, the arcsine law in monotone
	probability, and the Boolean stable and Boolean Meixner distributions
	\cite{BelinschiNica2008,ArizmendiHasebe2014,HasebeSaigo2011}.
	
	A particularly interesting construction in free probability arises from the
	free additive convolution of the semicircle distribution and the uniform
	distribution on an interval. This convolution appears naturally in the study
	of free positive multiplicative Brownian motion, where, after an exponential
	change of variables, the spectral distribution can be described in terms of
	the free additive convolution of a semicircle distribution and a uniform
	distribution. It has also been investigated from the viewpoints of explicit
	moment formulas, free convolution techniques, and free stochastic analysis;
	see, for example,
	\cite{Auer2026,Biane1997,Zhong2015,DriverHallKemp2022}. Motivated by this free analogue, it is natural to ask
	whether an analogous construction in the Boolean setting also gives rise to a
	family possessing explicit analytic and probabilistic properties.
	
	The study of explicit convolution families has long been a central theme in probability and noncommutative probability, where concrete examples often lead to new insights into the underlying convolution structures and limit phenomena. Families admitting closed-form transforms and cumulant sequences are particularly valuable, since they permit detailed investigations of analytic, combinatorial, and asymptotic properties that are generally inaccessible in the abstract setting. Moreover, such examples frequently serve as benchmarks for testing conjectures, discovering new distributional classes, and motivating further developments in the theory. The family
	\[
	\mu_t=\beta_t\uplus U_t,\qquad t>0,
	\]
	where $\beta_t$ denotes the symmetric Bernoulli distribution supported on
	$\{-\sqrt{t},\sqrt{t}\}$ and $U_t$ denotes the uniform distribution on
	$[-t/2,t/2]$, introduced in this paper provides such an example. Despite its simple definition, it possesses an explicit reciprocal Cauchy transform, Boolean cumulants governed by a series, rich scaling behaviour, and structural features that appear to distinguish it from the known Meixner-type families.
	
	Our first objective is to obtain an explicit expression for the Boolean energy
	transform of $\mu_t$ and its Laurent expansion. The latter yields closed
	formulae for the Boolean cumulants, from which
	explicit moment identities are obtained via interval-partition expansions.
	These formulas illustrate how the elementary constituents of the convolution
	interact to produce a family with surprisingly rich combinatorial structure.
	
	The second objective is to study the analytic behaviour of the family.
	We derive a differential equation satisfied by the Boolean energy transform,
	characterize the support of the absolutely continuous component through
	boundary values of the reciprocal Cauchy transform, and investigate the
	behaviour of the family under natural scaling transformations. In particular,
	we prove that two distinct scaling regimes recover the uniform distribution
	and the Bernoulli distribution, respectively, thereby connecting the family
	with its defining components.
	
	Finally, we examine several structural properties of the family, including its
	stability under scaling and the failure of the family to form a Boolean
	convolution semigroup. These results suggest that the family occupies an
	intermediate position between elementary Boolean distributions and more
	general explicitly computable convolution families.
	
	We hope that the present work provides a useful example illustrating how
	explicit transform methods can be employed to study Boolean convolution
	families and that it may motivate further investigations of analogous
	constructions in other notions of noncommutative independence.

	\section*{Preliminaries}
	
	In this section, we briefly recall the basic notions and notation used
	throughout the paper. Standard references include
	\cite{SpeicherWoroudi1997,Franz2009,Anshelevich2009}.
	
	\subsection*{Boolean convolution}
	
	Let $\mathcal{M}$ denote the collection of all Borel probability measures on
	$\mathbb{R}$.
	
	Boolean independence was introduced by Speicher and Woroudi
	\cite{SpeicherWoroudi1997}. If $\mu,\nu\in\mathcal{M}$, their Boolean additive
	convolution is denoted by $\mu\uplus\nu$. Unlike free convolution, Boolean
	convolution is linearized by the Boolean energy transform. Consequently,
	Boolean convolution is associative and every probability measure is
	$\uplus$-infinitely divisible \cite{Franz2009}.
	
	\subsection*{Analytic transforms}
	
	For $\mu\in\mathcal{M}$, its Cauchy transform is defined by
	\[
	G_{\mu}(z)
	=
	\int_{\mathbb{R}}
	\frac{1}{z-x}\,d\mu(x),
	\qquad
	z\in\mathbb{C}^{+},
	\]
	where
	\[
	\mathbb{C}^{+}
	=
	\{z\in\mathbb{C}:\Im z>0\}.
	\]
	
	The reciprocal Cauchy transform is
	\[
	F_{\mu}(z)
	=
	\frac{1}{G_{\mu}(z)},
	\]
	and the Boolean energy transform is defined by
	\[
	K_{\mu}(z)
	=
	z-F_{\mu}(z).
	\]
	
	The principal advantage of this transform is the linearization identity
	\[
	K_{\mu\uplus\nu}(z)
	=
	K_{\mu}(z)+K_{\nu}(z),
	\]
	which characterizes Boolean additive convolution
	\cite{Franz2009}.
	
	\subsection*{Boolean cumulants and moments}
	
	If the measure $\mu$ has moments of all orders, then its Boolean energy
	transform admits the Laurent expansion
	\[
	K_{\mu}(z)
	=
	\sum_{n=1}^{\infty}
	\frac{B_n(\mu)}{z^{\,n-1}},
	\]
	where $\{B_n(\mu)\}_{n\ge1}$ denote the Boolean cumulants of $\mu$.
	
	The moments are recovered from the Boolean cumulants through the
	interval-partition formula
	\[
	m_n(\mu)
	=
	\sum_{\pi\in\mathcal{I}(n)}
	\prod_{V\in\pi}
	B_{|V|}(\mu),
	\]
	where $\mathcal{I}(n)$ denotes the lattice of interval partitions of
	$\{1,\dots,n\}$.
	
	Further details may be found in
	\cite{SpeicherWoroudi1997,Anshelevich2009}.
	
	\subsection*{Scaling of probability measures}
	
	For $c>0$, let $D_c\mu$ denote the dilation of $\mu$, defined by
	\[
	D_c\mu(A)
	=
	\mu(c^{-1}A),
	\]
	for every Borel set $A\subseteq\mathbb{R}$.
	
	The reciprocal Cauchy transform satisfies the scaling relation
	\[
	F_{D_c\mu}(z)
	=
	cF_{\mu}\!\left(\frac{z}{c}\right),
	\]
	and therefore
	\[
	K_{D_c\mu}(z)
	=
	z-
	cF_{\mu}\!\left(\frac{z}{c}\right).
	\]
	
	These identities will be used repeatedly in the sequel.
	
	\subsection*{The Boolean convolution family}	
	Throughout this paper, we study a one-parameter family of probability measures
	obtained by the Boolean additive convolution of the symmetric Bernoulli
	distribution and the uniform distribution. For $t>0$, let
	\[
	\beta_t
	:=
	\frac12\delta_{-\sqrt{t}}
	+
	\frac12\delta_{\sqrt{t}},
	\]
	be the centered Bernoulli distribution with variance $t$, and let
	\[
	U_t
	:=
	\mathrm{Unif}\!\left[-\frac{t}{2},\frac{t}{2}\right]
	\]
	denote the uniform distribution on the interval
	$\left[-\frac{t}{2},\frac{t}{2}\right]$.
	We define
	\[
	\mu_t
	:=
	\beta_t\uplus U_t,
	\]
	where $\uplus$ denotes the Boolean additive convolution.
	
	The family $\{\mu_t\}_{t>0}$ will be the principal object of investigation in
	this paper. Since the Boolean energy transform linearizes Boolean additive
	convolution, the reciprocal Cauchy transform of $\mu_t$ admits an explicit
	representation in terms of those of $\beta_t$ and $U_t$. This explicit
	description enables us to derive closed-form expressions for the Boolean
	cumulants and moments of $\mu_t$, investigate its analytic properties, and
	establish its asymptotic behaviour under suitable scaling limits.
	
	An important bridge between Boolean and free probability is provided by the
	Bercovici--Pata bijection $\mathbb{B}$, which associates to every probability
	measure $\mu$ a freely infinitely divisible probability measure
	$\mathbb{B}(\mu)$ by identifying Boolean cumulants with free cumulants.
	Equivalently, if $K_{\mu}$ and $R_{\nu}$ denote the Boolean cumulant transform
	and the free $R$-transform, respectively, then
	\[
	R_{\mathbb{B}(\mu)}(z)=K_{\mu}(z).
	\]
	Since the family introduced in this paper is given by
	\[
	\mu_t=\beta_t\uplus U_t,
	\]
	where $\beta_t$ is the centered Bernoulli distribution of variance $t$ and
	$U_t$ is the uniform distribution on
	$\left[-\frac{t}{2},\frac{t}{2}\right]$, the Bercovici--Pata bijection yields a
	natural free analogue of $\mu_t$. Indeed, using the additivity of the Boolean
	cumulant transform and the characterization of $\mathbb{B}$, one obtains
	\[
	\mathbb{B}(\mu_t)
	=
	S_t
	\boxplus
	\mathbb{B}(U_t),
	\]
	where $S_t$ denotes the centered semicircle distribution of variance $t$.
	Thus, the Boolean family $\{\mu_t\}_{t>0}$ admits a canonical interpretation
	within free probability as the free convolution of a semicircle law with the
	free Bercovici--Pata image of the uniform distribution. This observation
	provides an additional structural connection between the Boolean and free
	settings.

	\section*{Construction and Analytic Properties of the Family $\mu_t$}
	
	In this section, we derive an explicit expression for the Boolean energy
	transform of the family $\{\mu_t\}_{t>0}$. Since the Boolean energy transform
	linearizes Boolean additive convolution, this representation serves as the
	foundation for all subsequent results, including the computation of Boolean
	cumulants and moments, the study of analytic properties, and the investigation
	of scaling limits.

	\begin{proposition}\label{prop:Ktrans}
		Let
		\[
		\mu_t=\beta_t\uplus U_t,
		\]
		where
		\[
		\beta_t=\frac12\delta_{-\sqrt t}
		+\frac12\delta_{\sqrt t},
		\qquad
		U_t=\operatorname{Unif}
		\!\left[-\frac t2,\frac t2\right].
		\]
		Then the Boolean energy transform of $\mu_t$ is
		\[
		K_{\mu_t}(z)
		=
		z+\frac tz
		-
		\frac{t}
		{\log\!\left(
			\dfrac{z+t/2}{z-t/2}
			\right)}.
		\]
	\end{proposition}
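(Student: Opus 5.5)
The plan is to use the linearization identity $K_{\mu\uplus\nu}=K_\mu+K_\nu$ recalled in the Preliminaries. This reduces the claim to computing $K_{\beta_t}$ and $K_{U_t}$ separately and adding them. Both Cauchy transforms are elementary, so the whole argument amounts to two short computations followed by a check that the branch of the logarithm is the correct one on $\mathbb{C}^+$.

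\emph{Bernoulli part.} For $\beta_t=\frac12\delta_{-\sqrt t}+\frac12\delta_{\sqrt t}$ we have
\[
G_{\beta_t}(z)=\frac12\Bigl(\frac1{z-\sqrt t}+\frac1{z+\sqrt t}\Bigr)=\frac{z}{z^2-t}.
\]
Hence $F_{\beta_t}(z)=z-t/z$ and $K_{\beta_t}(z)=t/z$.

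\emph{Uniform part.} For $U_t$ with density $1/t$ on $[-t/2,t/2]$,
\[
G_{U_t}(z)=\frac1t\int_{-t/2}^{t/2}\frac{dx}{z-x}=\frac1t\log\!\left(\frac{z+t/2}{z-t/2}\right).
\]
Therefore $F_{U_t}(z)=t/\log\bigl(\frac{z+t/2}{z-t/2}\bigr)$ and $K_{U_t}(z)=z-t/\log\bigl(\frac{z+t/2}{z-t/2}\bigr)$.

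Summing gives
\[
K_{\mu_t}(z)=K_{\beta_t}(z)+K_{U_t}(z)=z+\frac tz-\frac{t}{\log\bigl(\frac{z+t/2}{z-t/2}\bigr)},
\]
which is the stated formula.

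The only step needing care is the meaning of the logarithm. I would verify that for $z\in\mathbb{C}^+$ the Möbius map $w(z)=\frac{z+t/2}{z-t/2}$ sends $\mathbb{C}^+$ into the lower half-plane $\mathbb{C}^-$. Indeed, it maps the real line to itself and sends $z=i$ to a point with negative imaginary part. Since $\mathbb{C}^-$ avoids the negative real axis, the principal branch of $\log$ is holomorphic there, and its imaginary part lies in $(-\pi,0)$.

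I would then confirm that this principal branch agrees with $\int\frac{dx}{z-x}$. The cleanest way is to differentiate both expressions in $z$: each has derivative
\[
\frac{1}{z-t/2}-\frac{1}{z+t/2}.
\]
The two functions are holomorphic on $\mathbb{C}^+$, and both tend to $0$ as $z=iy$ with $y\to\infty$, so they coincide. Finally, $G_{U_t}$ is nonvanishing on $\mathbb{C}^+$, because $\Im G_{U_t}<0$ there. This justifies taking the reciprocal, and the identity holds on all of $\mathbb{C}^+$.
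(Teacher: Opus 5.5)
Your proof is correct and follows the paper's argument: linearize $K_{\mu_t}=K_{\beta_t}+K_{U_t}$, compute $K_{\beta_t}(z)=t/z$ and $K_{U_t}(z)=z-t/\log\bigl(\frac{z+t/2}{z-t/2}\bigr)$, and add. Your branch check (the M\"obius map sending $\mathbb{C}^+$ into $\mathbb{C}^-$, and matching the principal logarithm with the integral via derivatives and the limit along $z=iy$) is a slightly more careful version of what the paper asserts in its appendix lemma; one small slip there is that the common derivative is $\frac{1}{z+t/2}-\frac{1}{z-t/2}$, not its negative, which does not affect the argument.
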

	
	\begin{proof}
		Since Boolean additive convolution linearizes the
		Boolean cumulant transform,
		\[
		K_{\mu_t}(z)
		=
		K_{\beta_t}(z)+K_{U_t}(z).
		\]
		
		First,
		\[
		G_{\beta_t}(z)
		=
		\frac{z}{z^2-t}.
		\]
		Hence
		\[
		\frac1{G_{\beta_t}(z)}
		=
		\frac{z^2-t}{z}
		=
		z-\frac tz.
		\]
		Therefore
		\[
		K_{\beta_t}(z)
		=
		z-\frac1{G_{\beta_t}(z)}
		=
		\frac tz.
		\]
		
		Next,
		the Cauchy transform of the uniform distribution is (see Lemma~\ref{prop:Cauthyofuniform})
		\[
		G_{U_t}(z)
		=
		\frac1t
		\log
		\left(
		\frac{z+t/2}{z-t/2}
		\right).
		\]
		Here and throughout the paper, $\log$ denotes the principal branch of the
		complex logarithm, whose branch cut is $(-\infty,0]$.
		Since
		$\displaystyle\frac{z+t/2}{z-t/2}\notin(-\infty,0]$
		for every $z\in\mathbb C^{+}$,
		the expression is well defined. Consequently,
		\[
		\frac1{G_{U_t}(z)}
		=
		\frac{t}
		{\log
			\left(
			\dfrac{z+t/2}{z-t/2}
			\right)},
		\]
		and therefore
		\[
		K_{U_t}(z)
		=
		z-
		\frac{t}
		{\log
			\left(
			\dfrac{z+t/2}{z-t/2}
			\right)}.
		\]
		
		Combining these two identities yields
		\[
		K_{\mu_t}(z)
		=
		\frac tz
		+
		z-
		\frac{t}
		{\log
			\left(
			\dfrac{z+t/2}{z-t/2}
			\right)},
		\]
		which proves the proposition.
	\end{proof}
	
	The explicit representation obtained in Proposition~\ref{prop:Ktrans}
	enables us to investigate the analytic structure of the family through its
	Laurent expansion at infinity. We begin by deriving the corresponding
	expansion for the uniform component, which will subsequently be used to obtain
	explicit expressions for the Boolean cumulants of $\mu_t$.
	
\begin{theorem}\label{thm:uniformexpansion}
	Let
	\[
	U_t=\operatorname{Unif}
	\!\left[-\frac t2,\frac t2\right].
	\]
	Then, as $z\to\infty$,
	\[
	K_{U_t}(z)
	=
	\frac{t^2}{12z}
	-\frac{t^4}{720z^3}
	+\frac{t^6}{30240z^5}
	-\frac{t^8}{1209600z^7}
	-\frac{10196\,t^{10}}{467775\cdot 2^{10}z^9}
	+O\!\left(\frac1{z^{11}}\right).
	\]
	Equivalently,
	\[
	K_{U_t}(z)
	=
	\sum_{n=1}^{\infty}
	a_n
	\frac{t^{2n}}{z^{2n-1}},
	\]
	where the coefficients $a_n$ are determined by the reciprocal power series
	\[
	\frac{x}{\operatorname{artanh}(x)}
	=
	1-\frac{x^2}{3}
	-\frac{4x^4}{45}
	-\frac{44x^6}{945}
	-\frac{428x^8}{14175}
	-\frac{10196x^{10}}{467775}
	+\cdots.
	\]
	In particular,
	\[
	a_1=\frac1{12},\qquad
	a_2=-\frac1{720},\qquad
	a_3=\frac1{30240},\qquad
	a_4=-\frac1{1209600}.
	\]
\end{theorem}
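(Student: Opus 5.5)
The plan is to reduce everything to the single power series $x/\operatorname{artanh}(x)$ through the substitution $x=t/(2z)$, and then read off the coefficients. From the proof of Proposition~\ref{prop:Ktrans} (and Lemma~\ref{prop:Cauthyofuniform}) we have $G_{U_t}(z)=\frac1t\log\!\bigl(\frac{z+t/2}{z-t/2}\bigr)$. Dividing numerator and denominator inside the logarithm by $z$ gives
\[
\log\!\left(\frac{1+x}{1-x}\right)=2\operatorname{artanh}(x),\qquad x=\frac{t}{2z}.
\]
For $|z|>t/2$ we have $|x|<1$, and the principal branch agrees with the convergent series $2\sum_{k\ge0}x^{2k+1}/(2k+1)$, since both are analytic on the exterior of the segment and are real for large real $z$. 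Hence $G_{U_t}(z)=\frac{2}{t}\operatorname{artanh}(x)$. Writing $\frac{t}{2}=xz$, this gives
\[
F_{U_t}(z)=\frac{t}{2\operatorname{artanh}(x)}=z\cdot\frac{x}{\operatorname{artanh}(x)},
\qquad
K_{U_t}(z)=z\left(1-\frac{x}{\operatorname{artanh}(x)}\right).
\]

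Next, I will justify the reciprocal series. The function $\operatorname{artanh}(x)/x=\sum_{k\ge0}x^{2k}/(2k+1)$ is even, analytic on $|x|<1$, and equal to $1$ at $0$. Its reciprocal is therefore an even power series $\sum_{n\ge0}c_nx^{2n}$ with $c_0=1$, analytic near $0$. The coefficients $c_n$ are determined recursively by $\sum_{k=0}^{n}c_{n-k}/(2k+1)=0$ for $n\ge1$, which gives $c_1=-1/3$, $c_2=-4/45$, $c_3=-44/945$, and so on. Substituting into $K_{U_t}$ yields
\[
K_{U_t}(z)=-\sum_{n\ge1}c_n\,z\,x^{2n}=\sum_{n\ge1}\left(-\frac{c_n}{4^{n}}\right)\frac{t^{2n}}{z^{2n-1}}.
\]
This is the claimed form with $a_n=-c_n/4^n$. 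The expansion converges for $|z|$ large, which gives the $O(z^{-11})$ remainder after truncation. Only odd powers of $1/z$ occur, as expected from the symmetry of $U_t$.

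The main obstacle is not structural but arithmetic: carrying out the reciprocal-series recursion and the division by $4^n$ correctly. When I carry it out, I get $a_1=\frac{1}{3\cdot4}=\frac1{12}$, which matches the variance $t^2/12$. For $n=2$, however, I get $a_2=\frac{4}{45\cdot16}=\frac1{180}$. This agrees with the direct check $B_4=m_4-m_2^2=\frac{t^4}{80}-\frac{t^4}{144}=\frac{t^4}{180}$, and it disagrees in both sign and size with the stated $-\frac1{720}$. Similarly $a_3=\frac{44}{945\cdot64}=\frac{11}{15120}$. So I would prove the structural statement $a_n=-c_n/4^n$ as above, and then verify each listed numerical coefficient against it. I expect the explicit list in the statement to need correction: all $a_n$ should be positive, because all $c_n$ with $n\ge1$ are negative.
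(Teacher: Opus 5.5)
Your proposal is correct and takes the same route as the paper's proof. Both use the substitution $u=t/(2z)$, the identity $K_{U_t}(z)=z\bigl(1-u/\operatorname{artanh}(u)\bigr)$, and the reciprocal series of Lemma~\ref{lem:expansion}, whose coefficients $c_1,\dots,c_5$ are right.

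Your objection to the numerical values is also correct; the error is in the paper, not in your argument. The paper's own penultimate display gives $K_{U_t}(z)=\sum_{n\ge1}(-c_n)\,zu^{2n}$. With $zu^{2n}=t^{2n}/(2^{2n}z^{2n-1})$ this is exactly your $a_n=-c_n/4^n$, namely $a_2=\frac1{180}$, $a_3=\frac{11}{15120}$, $a_4=\frac{107}{907200}$, and a plus sign on the $z^{-9}$ term. So the paper's final ``consequently'' display does not follow from the line before it.

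Your check $m_4-m_2^2=t^4/180$ confirms this. The analogous order-eight check, $B_8=m_8-2B_2B_6-B_4^2-3B_2^2B_4-B_2^4$, gives exactly $107t^8/907200$.

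The printed values $\frac1{12},-\frac1{720},\frac1{30240},-\frac1{1209600}$ are $B_{2n}/(2n)!$ with $B_{2n}$ the Bernoulli numbers, which is how they reappear in the proof of Theorem~\ref{thm:nosemigroup}. These are the coefficients of $z\bigl(u\coth u-1\bigr)$, a function that agrees with $K_{U_t}$ only to first order. So the statement as written is false beyond its leading term, and only your corrected version can be proved.

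The one assertion you leave unjustified is that $c_n<0$ for every $n\ge1$, equivalently that every $a_n>0$. It is true, and here is a short argument:
\begin{itemize}
\item $G_{U_t}$ has no zeros off $[-t/2,t/2]$, since $\log\frac{z+t/2}{z-t/2}$ vanishes only at $z=\infty$.
\item Hence the Nevanlinna representation of the centered, finite-variance measure $U_t$ reads $K_{U_t}(z)=\int\frac{d\sigma(x)}{z-x}$, with $\sigma$ a finite positive symmetric measure on $[-t/2,t/2]$.
\item $\sigma$ is not of the form $c\,\delta_0$, since that would make $U_t$ a Bernoulli law.
\item Therefore $a_nt^{2n}=\int x^{2n-2}\,d\sigma(x)>0$ for all $n\ge1$.
\end{itemize}
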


\begin{proof}
	By Proposition~\ref{prop:Ktrans},
	\[
	K_{U_t}(z)
	=
	z-
	\frac{t}
	{\log\!\left(
		\dfrac{z+t/2}{z-t/2}
		\right)}.
	\]
	
	Let
	\[
	u=\frac{t}{2z}.
	\]
	Then
	\[
	\log\!\left(
	\frac{z+t/2}{z-t/2}
	\right)
	=
	\log\!\left(
	\frac{1+u}{1-u}
	\right)
	=
	2\operatorname{artanh}(u),
	\]
	and therefore
	\[
	K_{U_t}(z)
	=
	z-
	\frac{t}{2\operatorname{artanh}(u)}.
	\]
	
	Since $t=2zu$, we have
	\[
	\frac{t}{2\operatorname{artanh}(u)}
	=
	z\,
	\frac{u}{\operatorname{artanh}(u)}.
	\]
	Applying Lemma~\ref{lem:expansion},
	\[
	\frac{u}{\operatorname{artanh}(u)}
	=
	1-\frac{u^2}{3}
	-\frac{4u^4}{45}
	-\frac{44u^6}{945}
	-\frac{428u^8}{14175}
	-\frac{10196u^{10}}{467775}
	+O(u^{12}),
	\]
	gives
	\[
	\frac{t}{2\operatorname{artanh}(u)}
	=
	z
	-\frac{zu^2}{3}
	-\frac{4zu^4}{45}
	-\frac{44zu^6}{945}
	-\frac{428zu^8}{14175}
	-\frac{10196zu^{10}}{467775}
	+O(zu^{12}).
	\]
	
	Since
	\[
	u=\frac{t}{2z},
	\]
	we compute
	\[
	zu^{2n}
	=
	\frac{t^{2n}}{2^{2n}z^{2n-1}},
	\]
	and consequently
	\[
	\begin{aligned}
		K_{U_t}(z)
		&=
		\frac{t^2}{12z}
		-\frac{t^4}{720z^3}
		+\frac{t^6}{30240z^5}
		-\frac{t^8}{1209600z^7}
		-\frac{10196\,t^{10}}
		{467775\cdot2^{10}z^9}
		+O\!\left(\frac1{z^{11}}\right).
	\end{aligned}
	\]
	This completes the proof.
\end{proof}
	The explicit formula for $K_{\mu_t}$ also permits a direct analysis of its
	dependence on the parameter $t$. Differentiating the transform with respect to
	$t$ yields the following partial differential equation.
	\begin{theorem}\label{thm:Kdiffeq}
		Let
		\[
		K_{\mu_t}(z)
		=
		z+\frac tz
		-
		\frac{t}
		{\log\!\left(
			\dfrac{z+t/2}{z-t/2}
			\right)}.
		\]
		Then \(K_{\mu_t}\) satisfies the partial differential equation
		\[
		\frac{\partial K_{\mu_t}(z)}{\partial t}
		=
		\frac1z
		-
		\frac1
		{\log\!\left(
			\dfrac{z+t/2}{z-t/2}
			\right)}
		+
		\frac{tz}
		{\left(z^2-\dfrac{t^2}{4}\right)
			\left[
			\log\!\left(
			\dfrac{z+t/2}{z-t/2}
			\right)
			\right]^2}.
		\]
	\end{theorem}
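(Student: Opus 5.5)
The plan is to differentiate the explicit formula term by term in $t$, with $z\in\mathbb C^{+}$ held fixed. Write
\[
L(t,z)=\log\!\left(\frac{z+t/2}{z-t/2}\right)=\log(z+t/2)-\log(z-t/2).
\]
The first step is to justify differentiability. For $z\in\mathbb C^{+}$ and real $t>0$, the ratio $(z+t/2)/(z-t/2)$ avoids $(-\infty,0]$, as noted in the proof of Proposition~\ref{prop:Ktrans}. The principal logarithm is holomorphic off that cut, so $L$ is smooth in $t$. Moreover $L(t,z)\neq 0$, since $L=tG_{U_t}(z)$ and $G_{U_t}(z)$ has strictly negative imaginary part on $\mathbb C^{+}$. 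Hence the quotient $t/L$ is differentiable in $t$.

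Next, differentiate each piece. The term $z$ contributes nothing, and $t/z$ contributes $1/z$. For the logarithmic term, the chain rule applied to the ratio gives
\[
\frac{\partial L}{\partial t}
=\frac{1/2}{z+t/2}+\frac{1/2}{z-t/2}
=\frac{z}{z^2-t^2/4}.
\]
This is a direct derivative of the composition $\log\circ w$, where $w=(z+t/2)/(z-t/2)$, so it does not depend on splitting the log into two logs, which could introduce branch ambiguity.

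The quotient rule then yields
\[
\frac{\partial}{\partial t}\frac{t}{L}=\frac{1}{L}-\frac{t\,\partial_tL}{L^2}
=\frac1L-\frac{tz}{(z^2-t^2/4)L^2}.
\]
Subtracting this from $1/z$ produces exactly the claimed identity.

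The only potential obstacle is the branch issue: one must make sure the derivative of the principal logarithm is $w'/w$ along the path $t\mapsto w(t)$. This holds because $w(t)$ stays in $\mathbb C\setminus(-\infty,0]$, where the principal logarithm is holomorphic with derivative $1/w$. Everything else is routine algebra.
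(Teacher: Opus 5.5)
Your proposal is correct and follows the paper's proof: compute $\partial_t L = z/(z^2-t^2/4)$, then apply the quotient (equivalently, chain) rule to $t/L$ to get the three stated terms. Your extra justifications — that $(z+t/2)/(z-t/2)$ stays off the branch cut, and that $L\neq 0$ on $\mathbb C^{+}$ — add rigor that the paper leaves implicit.
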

	
	\begin{proof}
		Write
		\[
		L(t,z)
		=
		\log\!\left(
		\frac{z+t/2}{z-t/2}
		\right).
		\]
		Then
		\[
		K_{\mu_t}(z)
		=
		z+\frac tz-\frac t{L(t,z)}.
		\]
		
		Since
		\[
		\frac{\partial}{\partial t}
		\log(z+t/2)
		=
		\frac1{2(z+t/2)},
		\]
		and
		\[
		\frac{\partial}{\partial t}
		\log(z-t/2)
		=
		-\frac1{2(z-t/2)},
		\]
		it follows that
		\[
		\frac{\partial L}{\partial t}
		=
		\frac1{2(z+t/2)}
		+
		\frac1{2(z-t/2)}
		=
		\frac{z}
		{z^2-t^2/4}.
		\]
		
		Now differentiate
		\[
		K_{\mu_t}(z)
		=
		z+\frac tz-\frac tL.
		\]
		Using the product and chain rules,
		\[
		\frac{\partial K_{\mu_t}}{\partial t}
		=
		\frac1z
		-
		\frac1L
		+
		\frac t{L^2}
		\frac{\partial L}{\partial t}.
		\]
		Substituting the expression for
		\(\partial_tL\) gives
		\[
		\frac{\partial K_{\mu_t}(z)}{\partial t}
		=
		\frac1z
		-
		\frac1L
		+
		\frac{tz}
		{L^2\left(z^2-t^2/4\right)},
		\]
		which is precisely the stated identity.
	\end{proof}
    We conclude this section by examining several analytic consequences of the
    explicit reciprocal Cauchy transform. Using its boundary values and zeros, we
    obtain criteria for the symmetry of $\mu_t$, the support of its absolutely
    continuous part, and the existence of atoms.
    \begin{theorem}\label{thm:supportcriterion}
    	Let
    	\[
    	\mu_t=\beta_t\uplus U_t,
    	\]
    	and let
    	\[
    	F_{\mu_t}(z)
    	=
    	\frac{t}
    	{\log\!\left(
    		\dfrac{z+t/2}{z-t/2}
    		\right)}
    	-\frac{t}{z},
    	\qquad z\in\mathbb C^{+},
    	\]
    	be its reciprocal Cauchy transform.
    	
    	Then the following assertions hold.
    	
    	\begin{enumerate}
    		
    		\item
    		The measure $\mu_t$ is symmetric, namely
    		\[
    		\mu_t(A)=\mu_t(-A)
    		\]
    		for every Borel set $A\subset\mathbb R$.
    		
    		\item
    		Let $x\in\mathbb R$.
    		Assume that the non-tangential boundary value
    		$F_{\mu_t}(x+i0)$ exists, is finite, and satisfies
    		\[
    		\Im F_{\mu_t}(x+i0)>0.
    		\]
    		Then
    		\[
    		\Im G_{\mu_t}(x+i0)
    		=
    		-\frac{\Im F_{\mu_t}(x+i0)}
    		{|F_{\mu_t}(x+i0)|^2}
    		<0,
    		\]
    		where
    		\[
    		G_{\mu_t}(z)=\frac1{F_{\mu_t}(z)}.
    		\]
    		Consequently,
    		\[
    		\frac{d\mu_t^{ac}}{dx}(x)
    		=
    		-\frac1\pi
    		\Im G_{\mu_t}(x+i0)
    		>0.
    		\]
    		In particular, $x$ belongs to the support of the absolutely continuous
    		part of $\mu_t$.
    		
    		\item
    		Suppose that
    		\[
    		F_{\mu_t}(x)=0
    		\]
    		and
    		\[
    		F_{\mu_t}'(x)>0.
    		\]
    		Then $x$ is an atom of $\mu_t$, and
    		\[
    		\mu_t(\{x\})
    		=
    		\frac1{F_{\mu_t}'(x)}.
    		\]
    		
    		Conversely, if $x$ is an atom of $\mu_t$, then
    		\[
    		F_{\mu_t}(x)=0,
    		\qquad
    		F_{\mu_t}'(x)>0,
    		\]
    		and
    		\[
    		\mu_t(\{x\})
    		=
    		\frac1{F_{\mu_t}'(x)}.
    		\]
    		
    	\end{enumerate}
    	
    \end{theorem}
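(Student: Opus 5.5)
The plan is to take the three assertions in turn, using only the explicit formula $F_{\mu_t}(z)=z-K_{\mu_t}(z)$ from Proposition~\ref{prop:Ktrans} and standard facts about Cauchy transforms.

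For (1), I would show that $G_{\mu_t}(-\bar z)=-\overline{G_{\mu_t}(z)}$ for $z\in\mathbb C^+$; this identity characterizes symmetric measures. Set
\[
L(z)=\log\frac{z+t/2}{z-t/2}.
\]
The ratio $w=(z+t/2)/(z-t/2)$ satisfies $w(-\bar z)=\overline{1/w(z)}$. The principal logarithm obeys $\log(1/w)=-\log w$ and $\log\bar w=\overline{\log w}$ off the cut, and $w\notin(-\infty,0]$ on $\mathbb C^+$. Hence $L(-\bar z)=-\overline{L(z)}$, and therefore $F_{\mu_t}(-\bar z)=-\overline{F_{\mu_t}(z)}$. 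This transfers to $G_{\mu_t}$. Alternatively, both $\beta_t$ and $U_t$ are symmetric, so $K_{\beta_t}$ and $K_{U_t}$ are odd in this sense, and additivity of $K$ under $\uplus$ gives the same conclusion for $\mu_t$.

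For (2), the algebra is immediate once the boundary value is known:
\[
G=\frac1F=\frac{\bar F}{|F|^2},\qquad\text{so}\qquad \Im G(x+i0)=-\frac{\Im F(x+i0)}{|F(x+i0)|^2}.
\]
The identity $F(x+i0)\neq0$ holds because $\Im F(x+i0)>0$, so the non-tangential limit of $G$ exists and equals $1/F(x+i0)$. Then I invoke the Stieltjes inversion / Fatou theorem: at Lebesgue-a.e.\ point, $-\frac1\pi\Im G_{\mu}(x+i0)$ is the density of $\mu^{ac}$. I would state the conclusion in this a.e.\ sense, or in the sense that the density has a continuous positive version near $x$.

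This is the delicate step. A pointwise positive limit at a single $x$ does not by itself place $x$ in the support of $\mu^{ac}$. To fix it, I would use the explicit formula: $F_{\mu_t}$ extends continuously to $\mathbb R\setminus\{0,\pm t/2\}$, since $L(x+i0)$ is continuous there, with imaginary part $i\pi$ on $(-t/2,t/2)$. Hence $\Im F>0$ persists on a neighbourhood of $x$. The density $-\frac1\pi\Im G$ is then continuous and positive on that interval, so $x$ lies in the support of $\mu_t^{ac}$.

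For (3), I would use the standard description of atoms through $F$:
\[
\mu(\{x\})=\lim_{y\downarrow0} iy\,G(x+iy)=\lim_{y\downarrow0}\frac{iy}{F(x+iy)}.
\]
If $F$ extends analytically, or at least differentiably along the normal direction, at $x$ with $F(x)=0$, then $F(x+iy)=iyF'(x)+o(y)$. This gives $\mu(\{x\})=1/F'(x)$, which is positive precisely when $F'(x)>0$.

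For the converse, if $\mu(\{x\})=m>0$, then $F(x+iy)/(iy)\to1/m$, which identifies $F(x)=0$ and the normal derivative as $1/m$. The derivative $F'(x)$ here should be read as this Julia--Carathéodory angular derivative, which is real because $\Im F\ge\Im z$ (Nevanlinna). The identity $\mu(\{x\})=1/F'(x)$ then follows.

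The main obstacle is regularity: justifying that the boundary values of $F_{\mu_t}$ and the derivative $F'(x)$ make sense. I would handle this by appealing to Julia--Carathéodory in general, and noting that for $\mu_t$ the atoms can only sit where $L(x)$ is real, i.e.\ $|x|>t/2$. There $F$ is real-analytic, with $F(x)=t/L(x)-t/x$, so $F'(x)$ is an honest derivative.
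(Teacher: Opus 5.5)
Your proposal is correct. It follows the paper's skeleton: symmetry of $F_{\mu_t}$ for (1), where the paper's argument via oddness of $F_{\beta_t}$, $F_{U_t}$ and additivity of $K$ is your ``alternative''; $G=1/F$ plus Stieltjes inversion for (2); and zeros of $F_{\mu_t}$ for (3). Where you genuinely differ is in the justification.

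The paper applies Stieltjes inversion pointwise at a single $x$. It handles atoms in both directions by saying that a simple zero of $F_{\mu_t}$ is a simple pole of $G_{\mu_t}$ whose residue is the mass. This tacitly assumes regularity of $F_{\mu_t}$ at $x$, and the converse (``an atom gives a simple pole'') is asserted rather than proved.

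You instead use continuity of the boundary values of the explicit $F_{\mu_t}$ on $\mathbb R\setminus\{0,\pm t/2\}$. This upgrades the hypothesis at one point to positivity of a continuous version of the density on a neighbourhood, which is what actually places $x$ in $\operatorname{supp}\mu_t^{ac}$. You are right that for a general measure a single positive limit of $-\frac1\pi\Im G$ does not suffice, so the paper's generic argument is incomplete there. For atoms you use $\mu(\{x\})=\lim_{y\downarrow0}iy\,G(x+iy)$ and the Julia--Carath\'eodory angular derivative, which proves the converse without presupposing an isolated pole. The paper's route buys brevity; yours buys an actual proof, at the price of using the specific form of $F_{\mu_t}$.

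Two small points to tidy up.

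First, on $(-t/2,t/2)$ the boundary value is $L(x+i0)=\ell(x)-i\pi$ with $\ell(x)=\log\left|\frac{x+t/2}{x-t/2}\right|$. Its imaginary part is $-\pi$, not $i\pi$. Hence $\Im F_{\mu_t}(x+i0)=\pi t/(\ell(x)^2+\pi^2)>0$ for $x\neq0$, which is the explicit reason $F_{\mu_t}$ has no boundary zeros there.

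Second, dispose of the exceptional points explicitly. As $z\to0$ in $\mathbb C^+$ one has $F_{\mu_t}(z)\to\infty$, and as $z\to\pm t/2$ one has $F_{\mu_t}(z)\to\mp2$. So neither the hypothesis of (2) nor an atom can occur at $0$ or $\pm t/2$. This completes your claim that atoms lie in $\{|x|>t/2\}$, where $F_{\mu_t}$ is real-analytic.
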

    
    \begin{proof}
    	
    	Since both $\beta_t$ and $U_t$ are symmetric probability measures,
    	their reciprocal Cauchy transforms satisfy
    	\[
    	F_{\beta_t}(-z)=-F_{\beta_t}(z),
    	\qquad
    	F_{U_t}(-z)=-F_{U_t}(z).
    	\]
    	Using the Boolean linearization identity
    	\[
    	K_{\mu_t}
    	=
    	K_{\beta_t}+K_{U_t},
    	\]
    	we obtain
    	\[
    	F_{\mu_t}(z)
    	=
    	z-K_{\mu_t}(z),
    	\]
    	which is again an odd function. Hence
    	\[
    	G_{\mu_t}(-z)
    	=
    	-G_{\mu_t}(z),
    	\]
    	and therefore $\mu_t$ is symmetric.
    	
    	Now suppose that the non-tangential boundary value
    	$F_{\mu_t}(x+i0)$ exists and satisfies
    	\[
    	\Im F_{\mu_t}(x+i0)>0.
    	\]
    	Since
    	\[
    	G_{\mu_t}(z)=\frac1{F_{\mu_t}(z)},
    	\]
    	we have
    	\[
    	\Im G_{\mu_t}(x+i0)
    	=
    	-\frac{\Im F_{\mu_t}(x+i0)}
    	{|F_{\mu_t}(x+i0)|^2}
    	<0.
    	\]
    	By the Stieltjes inversion formula,
    	\[
    	\frac{d\mu_t^{ac}}{dx}(x)
    	=
    	-\frac1\pi
    	\Im G_{\mu_t}(x+i0),
    	\]
    	and therefore
    	\[
    	\frac{d\mu_t^{ac}}{dx}(x)>0.
    	\]
    	Hence $x$ belongs to the support of the absolutely continuous part of
    	$\mu_t$.
    	
    	Finally, suppose that
    	\[
    	F_{\mu_t}(x)=0,
    	\qquad
    	F_{\mu_t}'(x)>0.
    	\]
    	Since $F_{\mu_t}$ has a simple zero at $x$,
    	\[
    	F_{\mu_t}(z)
    	=
    	F_{\mu_t}'(x)(z-x)+o(z-x),
    	\]
    	and therefore
    	\[
    	G_{\mu_t}(z)
    	=
    	\frac1{F_{\mu_t}(z)}
    	=
    	\frac1{F_{\mu_t}'(x)}
    	\frac1{z-x}
    	+
    	o\!\left(\frac1{z-x}\right).
    	\]
    	Thus $G_{\mu_t}$ has a simple pole at $x$ with residue
    	\[
    	\operatorname{Res}_{z=x}G_{\mu_t}(z)
    	=
    	\frac1{F_{\mu_t}'(x)}.
    	\]
    	The residue equals the mass of the atom at $x$, so
    	\[
    	\mu_t(\{x\})
    	=
    	\frac1{F_{\mu_t}'(x)}.
    	\]
    	
    	Conversely, if $x$ is an atom of $\mu_t$, then $G_{\mu_t}$ has a simple
    	pole at $x$. Hence $F_{\mu_t}=1/G_{\mu_t}$ has a simple zero there,
    	so
    	\[
    	F_{\mu_t}(x)=0,
    	\]
    	and
    	\[
    	F_{\mu_t}'(x)
    	=
    	\frac1{\mu_t(\{x\})}
    	>0.
    	\]
    	Therefore
    	\[
    	\mu_t(\{x\})
    	=
    	\frac1{F_{\mu_t}'(x)}.
    	\]
    	This completes the proof.
    	
    \end{proof}
   
   \section*{Boolean Cumulants and Moments}
   
   The explicit analytic representation of the Boolean energy transform obtained
   in the previous section allows one to determine the Boolean cumulants of
   $\mu_t$ directly from its Laurent expansion at infinity. We begin by deriving
   closed-form expressions for the cumulants, and then use the Boolean moment--cumulant formula to
   compute the corresponding moments.
   
 \begin{theorem}\label{thm:booleancumulants}
 	Let
 	\[
 	\mu_t=\beta_t\uplus U_t.
 	\]
 	Then all odd Boolean cumulants vanish,
 	\[
 	b_{2n+1}(t)=0,
 	\qquad n\ge0,
 	\]
 	and the even Boolean cumulants satisfy
 	\[
 	b_2(t)
 	=
 	t+\frac{t^2}{12},
 	\]
 	together with
 	\[
 	b_4(t)
 	=
 	-\frac{t^4}{720},
 	\qquad
 	b_6(t)
 	=
 	\frac{t^6}{30240},
 	\qquad
 	b_8(t)
 	=
 	-\frac{t^8}{1209600},
 	\]
 	and, more generally,
 	\begin{equation}\label{eq:cumofmut}
 	b_{2n}(t)
 	=
 	a_n t^{2n},
 	\qquad n\ge2,
 	\end{equation}
 	where the coefficients \(a_n\) are determined by the expansion
 	\[
 	\frac{x}{\operatorname{artanh}(x)}
 	=
 	1-\frac{x^2}{3}
 	-\frac{4x^4}{45}
 	-\frac{44x^6}{945}
 	-\frac{428x^8}{14175}
 	-\frac{10196x^{10}}{467775}
 	+\cdots,
 	\]
 	or equivalently,
 	\[
 	\sum_{n=1}^{\infty}
 	a_nx^{2n}
 	=
 	\frac14
 	\left(
 	1-
 	\frac{x/2}{\operatorname{artanh}(x/2)}
 	\right).
 	\]
 \end{theorem}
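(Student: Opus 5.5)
The plan is to read off the Boolean cumulants from the Laurent expansion of $K_{\mu_t}$ at infinity, using the linearization $K_{\mu_t}=K_{\beta_t}+K_{U_t}$ established in the proof of Proposition~\ref{prop:Ktrans}. By the definition recalled in the Preliminaries, $K_{\mu}(z)=\sum_{n\ge1}B_n(\mu)z^{-(n-1)}$. So $b_n(t)$ is the coefficient of $z^{-(n-1)}$ in $K_{\mu_t}(z)$. The two summands can be treated separately, since Laurent coefficients are additive.

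First, $K_{\beta_t}(z)=t/z$ contributes only to the coefficient of $z^{-1}$, that is, to $b_2$, and the amount it contributes is $t$. Second, for the uniform part I will invoke Theorem~\ref{thm:uniformexpansion}. It gives
\[
K_{U_t}(z)=\sum_{n\ge1}a_n\frac{t^{2n}}{z^{2n-1}},
\]
with $a_1=\frac1{12}$, $a_2=-\frac1{720}$, $a_3=\frac1{30240}$ and $a_4=-\frac1{1209600}$. Only odd powers $z^{-(2n-1)}$ occur. Matching $z^{-(2n-1)}=z^{-(m-1)}$ forces $m=2n$, so $b_{2n}(t)$ picks up $a_nt^{2n}$. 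Adding the Bernoulli contribution, I obtain $b_2(t)=t+a_1t^2=t+\frac{t^2}{12}$, and $b_{2n}(t)=a_nt^{2n}$ for $n\ge2$. This yields \eqref{eq:cumofmut} and the listed values of $b_4$, $b_6$ and $b_8$ directly from the listed $a_2$, $a_3$ and $a_4$.

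For the vanishing of the odd cumulants there are two routes. One is to observe that both expansions contain no even powers of $1/z$, and in particular no constant term: $B_1$ would be the constant term, which vanishes. The other, more structurally, is to use that $K_{\mu_t}$ is odd in $z$, as shown in part 1 of Theorem~\ref{thm:supportcriterion}. An odd function has only odd powers $z^{-(2n-1)}$ in its Laurent series, so $b_{2n+1}=0$ for all $n\ge0$.

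It remains to rewrite the defining series of the $a_n$ as a generating function. I would start from the identity used in the proof of Theorem~\ref{thm:uniformexpansion}, namely
\[
K_{U_t}(z)=z\Bigl(1-\frac{u}{\operatorname{artanh}u}\Bigr),\qquad u=\frac{t}{2z}.
\]
I would then substitute a single scaling variable $x$ proportional to $t/z$ and compare with $\sum_n a_n t^{2n}z^{-(2n-1)}$, so that the left side becomes $z$ times a power series in $x^2$. This should produce the form $\frac14\bigl(1-\frac{x/2}{\operatorname{artanh}(x/2)}\bigr)$ stated in the theorem.

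I expect this last step to be the main obstacle. It is pure bookkeeping of the powers of $2$, namely $zu^{2n}=t^{2n}2^{-2n}z^{1-2n}$, together with the prefactor coming from the choice of $x$. I would fix the normalization of $x$ by matching the first coefficient $a_1=\frac1{12}$, and then verify the next one, $a_2$, against Theorem~\ref{thm:uniformexpansion} before asserting the identity in general.
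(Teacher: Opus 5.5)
\textbf{The gap.} Your route is the same as the paper's: split $K_{\mu_t}=t/z+K_{U_t}$, read off Laurent coefficients at infinity, and use parity for the odd cumulants. That skeleton is sound. It correctly gives $b_{2n+1}=0$, $b_2=t+a_1t^2$ and $b_{2n}=a_nt^{2n}$ for $n\ge2$, where $a_n$ is the coefficient of $t^{2n}z^{1-2n}$ in $K_{U_t}$. The trouble is that you take $a_2=-\frac1{720}$, $a_3=\frac1{30240}$, $a_4=-\frac1{1209600}$ from Theorem~\ref{thm:uniformexpansion} on trust, and the bookkeeping you yourself describe contradicts them. From $K_{U_t}(z)=z\bigl(1-\frac{u}{\operatorname{artanh}u}\bigr)=z\bigl(\frac{u^2}{3}+\frac{4u^4}{45}+\frac{44u^6}{945}+\frac{428u^8}{14175}+\cdots\bigr)$ and $zu^{2n}=4^{-n}t^{2n}z^{1-2n}$, one gets
\[
a_1=\tfrac1{12},\qquad a_2=\tfrac1{180},\qquad a_3=\tfrac{11}{15120},\qquad a_4=\tfrac{107}{907200},
\]
all positive. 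The numbers you quoted are $B_{2n}/(2n)!$ with $B_{2n}$ the Bernoulli numbers, i.e.\ the Taylor coefficients of $\frac{x/2}{\tanh(x/2)}$; they do not come from $x/\operatorname{artanh}x$. An independent check settles it. Because $b_1=b_3=0$, the interval-partition formula gives $m_4=b_4+b_2^2$, so $b_4=m_4-m_2^2\ge0$ by Cauchy--Schwarz for every symmetric law. Hence $b_4(\mu_t)=-t^4/720$ is impossible. Concretely, $m_2(U_t)=t^2/12$ and $m_4(U_t)=t^4/80$ give $b_4(U_t)=t^4/180$, while $b_4(\beta_t)=0$.

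\textbf{The generating-function step.} Your final step cannot be completed, and no normalization of $x$ rescues it. With $x=t/z$ one has $K_{U_t}(z)=z\bigl(1-\frac{x/2}{\operatorname{artanh}(x/2)}\bigr)$, so $\sum_{n\ge1}a_nx^{2n}=1-\frac{x/2}{\operatorname{artanh}(x/2)}$, with no factor $\frac14$. The stated identity is between power series with fixed coefficients, and its $x^2$-coefficient would force $a_1=\frac1{48}\neq\frac1{12}$. Choosing $x=\lambda t/z$ instead only produces the same identity in the variable $x/\lambda$. So the check you planned (match $a_1$, then test $a_2$ against Theorem~\ref{thm:uniformexpansion}) fails at both stages.

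\textbf{What your argument proves.} It proves the corrected statement: $b_{2n+1}=0$, $b_2=t+\frac{t^2}{12}$, and $b_{2n}=a_nt^{2n}$ for $n\ge2$ with $\sum_{n\ge1}a_nx^{2n}=1-\frac{x/2}{\operatorname{artanh}(x/2)}$. This gives $b_4=\frac{t^4}{180}$, $b_6=\frac{11t^6}{15120}$ and $b_8=\frac{107t^8}{907200}$. The stated values of $b_4,b_6,b_8$ and the $\frac14$-identity are false. The paper's own proof, which simply reads them off Theorem~\ref{thm:uniformexpansion}, has the same defect.
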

 
 \begin{proof}
 	By Proposition~\ref{prop:Ktrans},
 	\[
 	K_{\mu_t}(z)
 	=
 	\frac{t}{z}
 	+
 	K_{U_t}(z).
 	\]
 	Using Theorem~\ref{thm:uniformexpansion},
 	\[
 	K_{U_t}(z)
 	=
 	\frac{t^2}{12z}
 	-\frac{t^4}{720z^3}
 	+\frac{t^6}{30240z^5}
 	-\frac{t^8}{1209600z^7}
 	+O(z^{-9}),
 	\]
 	we obtain
 	\[
 	K_{\mu_t}(z)
 	=
 	\frac{t+t^2/12}{z}
 	-\frac{t^4}{720z^3}
 	+\frac{t^6}{30240z^5}
 	-\frac{t^8}{1209600z^7}
 	+O(z^{-9}).
 	\]
 	
 	On the other hand, the Boolean cumulant expansion is
 	\[
 	K_{\mu_t}(z)
 	=
 	\sum_{m=1}^{\infty}
 	\frac{b_m(t)}{z^{m-1}}.
 	\]
 	Comparing coefficients of equal powers of \(z^{-1}\) gives
 	\[
 	b_1(t)=0,
 	\]
 	\[
 	b_2(t)
 	=
 	t+\frac{t^2}{12},
 	\]
 	and
 	\[
 	b_4(t)
 	=
 	-\frac{t^4}{720},
 	\qquad
 	b_6(t)
 	=
 	\frac{t^6}{30240},
 	\qquad
 	b_8(t)
 	=
 	-\frac{t^8}{1209600}.
 	\]
 	
 	More generally, if
 	\[
 	K_{U_t}(z)
 	=
 	\sum_{n=1}^{\infty}
 	a_n\frac{t^{2n}}{z^{2n-1}},
 	\]
 	then
 	\[
 	b_{2n}(t)=a_nt^{2n},
 	\qquad n\ge2.
 	\]
 	Since only odd negative powers
 	\(
 	z^{-1},z^{-3},z^{-5},\ldots
 	\)
 	occur in the expansion of \(K_{\mu_t}\), every odd Boolean cumulant vanishes:
 	\[
 	b_{2n+1}(t)=0,
 	\qquad n\ge0.
 	\]
 	This completes the proof.
 \end{proof}
   Having determined the Boolean cumulants of $\mu_t$, we now compute its moment
   sequence. By combining the cumulant formulas with the interval-partition
   expansion for Boolean moments, we obtain explicit expressions for all moments
   of $\mu_t$, revealing their polynomial dependence on the parameter $t$.
\begin{theorem}\label{thm:explicitmoments}
	Let
	\[
	\mu_t=\beta_t\uplus U_t.
	\]
	Then all odd moments vanish,
	\[
	m_{2n+1}(t)=0,
	\qquad n\ge0,
	\]
	and for every integer \(n\ge1\),
	\[
	m_{2n}(t)
	=
	\sum_{\pi\in\mathcal I(2n)}
	\prod_{V\in\pi}
	c_{|V|}(t),
	\]
	where \(\mathcal I(2n)\) denotes the family of interval partitions of
	\(\{1,\ldots,2n\}\), and
	\[
	c_r(t)=
	\begin{cases}
		0,&r\ \text{odd},\\[1ex]
		t+\dfrac{t^2}{12},&r=2,\\[2ex]
		a_{r/2}\,t^r,&r\ge4,\ r\ \text{even},
	\end{cases}
	\]
	with the coefficients \(a_n\) determined by the expansion
	\[
	\frac{x}{\operatorname{artanh}(x)}
	=
	1-\frac{x^2}{3}
	-\frac{4x^4}{45}
	-\frac{44x^6}{945}
	-\frac{428x^8}{14175}
	-\frac{10196x^{10}}{467775}
	+\cdots.
	\]
	Equivalently,
	\[
	a_1=\frac1{12},\qquad
	a_2=-\frac1{720},\qquad
	a_3=\frac1{30240},\qquad
	a_4=-\frac1{1209600},\ \ldots
	\]
	
	Consequently,
	\[
	m_{2n}(t)
	\]
	is a polynomial in \(t\) whose coefficients are universal rational numbers.
\end{theorem}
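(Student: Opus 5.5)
We will derive the statement directly from Theorem~\ref{thm:booleancumulants} combined with the interval-partition moment--cumulant formula from the Preliminaries:
\[
m_k(\mu_t)=\sum_{\pi\in\mathcal I(k)}\prod_{V\in\pi}B_{|V|}(\mu_t).
\]
First we need $\mu_t$ to have moments of all orders, so that this formula applies. This follows because $K_{\mu_t}$ has the convergent expansion of Theorem~\ref{thm:uniformexpansion} near infinity, valid for $|z|>t/2$ since $\operatorname{artanh}$ is analytic on the unit disc and $u/\operatorname{artanh}(u)$ has no zeros for small $|u|$. Consequently $G_{\mu_t}$ is analytic at infinity, so $\mu_t$ is compactly supported. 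Setting $B_r(\mu_t)=b_r(t)=c_r(t)$ then gives the cumulant identification by Theorem~\ref{thm:booleancumulants}: $c_r$ vanishes for odd $r$, equals $t+t^2/12$ for $r=2$, and equals $a_{r/2}t^r$ for even $r\ge4$.

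Next, for odd $k=2n+1$, every interval partition of $\{1,\dots,2n+1\}$ contains at least one block of odd size, since the block sizes sum to an odd number. Each product therefore contains a factor $c_{\text{odd}}=0$, and so $m_{2n+1}(t)=0$. (Alternatively, this follows from the symmetry in Theorem~\ref{thm:supportcriterion}.) For even $k=2n$, the formula is exactly the displayed sum, and only partitions with all blocks of even size contribute.

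For polynomiality, each $c_r(t)$ is a polynomial in $t$ with rational coefficients. The constant $1/12$ is rational, and each $a_n$ is rational because it is a Taylor coefficient of $x/\operatorname{artanh}(x)$. That function is the reciprocal of a power series with rational coefficients $\sum x^{2k}/(2k+1)$ and constant term $1$, so its coefficients are obtained by a rational recursion. The set $\mathcal I(2n)$ is finite, with $2^{2n-1}$ elements, so $m_{2n}(t)$ is a finite sum of finite products of such polynomials. It is therefore a polynomial with universal rational coefficients independent of $t$, of degree at most $2n$.

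The only point requiring care is the justification that all moments exist and that the Laurent expansion of $K_{\mu_t}$ really encodes the Boolean cumulants, rather than merely being an asymptotic series. The rest is bookkeeping with interval partitions.
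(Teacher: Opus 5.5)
Your argument is the same as the paper's: the interval-partition moment--cumulant formula, vanishing of the odd cumulants, substitution of the cumulants from Theorem~\ref{thm:booleancumulants}, and finiteness of $\mathcal I(2n)$. Your parity count of block sizes makes explicit why every interval partition of an odd set has an odd block. Your other additions are also sound and fill small gaps the paper leaves: compact support of $\mu_t$ (from analyticity of $K_{\mu_t}$, hence of $G_{\mu_t}=1/(z-K_{\mu_t})$, near $\infty$) and rationality of the $a_n$.

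The step you should not take on trust is the identification $b_{2n}(t)=a_nt^{2n}$ with the \emph{listed} values of $a_n$; the paper's proof inherits it unchecked as well. The coefficients come from $K_{U_t}(z)=z\bigl(1-\frac{u}{\operatorname{artanh}u}\bigr)$ with $u=\frac{t}{2z}$. This gives $\sum_{n\ge1}a_nx^{2n}=1-\frac{x/2}{\operatorname{artanh}(x/2)}$, with no factor $\frac14$ as in Theorem~\ref{thm:booleancumulants}. Hence $a_1=\frac1{12}$, $a_2=\frac1{180}$, $a_3=\frac{11}{15120}$, $a_4=\frac{107}{907200}$, all positive. A direct check confirms this: $b_4(U_t)=m_4-m_2^2=\frac{t^4}{80}-\frac{t^4}{144}=\frac{t^4}{180}$.

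The values $-\frac1{720},\frac1{30240},-\frac1{1209600}$ in the statement are the quotients $B_{2n}/(2n)!$ of Bernoulli numbers. Using $b_4=-\frac{t^4}{720}$ for $U_t$, the moment formula would return $m_4(U_t)=\frac{t^4}{180}$ instead of the true $\frac{t^4}{80}$.

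So your proof establishes the theorem with $a_n$ defined by the series. The vanishing of odd moments and the polynomiality with universal rational coefficients are unaffected. The ``equivalently'' list of values, however, is false and cannot be proved; you should replace it with the correct values rather than cite it.
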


\begin{proof}
	For every probability measure, the moment--Boolean cumulant formula gives
	\[
	m_n
	=
	\sum_{\pi\in\mathcal I(n)}
	\prod_{V\in\pi}
	b_{|V|},
	\]
	where the sum ranges over all interval partitions.
	
	By Theorem~\ref{thm:booleancumulants},
	\[
	b_{2k+1}=0,
	\]
	so every interval partition containing an odd block contributes zero.
	Hence only interval partitions whose block sizes are all even contribute,
	which immediately implies
	\[
	m_{2n+1}=0.
	\]
	
	For even moments, substituting the Boolean cumulants
	\[
	b_2=t+\frac{t^2}{12},
	\qquad
	b_{2k}=a_kt^{2k},
	\quad k\ge2,
	\]
	into the interval-partition formula yields
	\[
	m_{2n}(t)
	=
	\sum_{\pi\in\mathcal I(2n)}
	\prod_{V\in\pi}
	c_{|V|}(t),
	\]
	where \(c_r(t)\) is defined above.
	
	Since each factor \(c_r(t)\) is a polynomial in \(t\), every summand is a
	polynomial in \(t\), and therefore \(m_{2n}(t)\) is itself a polynomial in
	\(t\). Its coefficients are universal rational numbers determined by the
	coefficients \(a_n\) of the reciprocal series
	\[
	\frac{x}{\operatorname{artanh}(x)}.
	\]
	This completes the proof.
\end{proof}   
     The following result is a specialization of the classical transfer theorem
   from singularity analysis to Cauchy transforms with a simple dominant pole (see \cite{FlajoletSedgewick2009} for details).
   
   \begin{proposition}\label{prop:dominantzero}
   	Let $\mu$ be a compactly supported probability measure on $\mathbb R$.
   	Suppose that its reciprocal Cauchy transform
   	\[
   	F_\mu(z)=\frac1{G_\mu(z)}
   	\]
   	extends analytically to a domain $\Omega\subset\mathbb C$
   	containing a neighbourhood of infinity.
   	
   	Assume that
   	
   	\begin{enumerate}
   		\item
   		there exists a real number $x_0\in\Omega$ such that
   		\[
   		F_\mu(x_0)=0,
   		\qquad
   		F_\mu'(x_0)\neq0;
   		\]
   		
   		\item
   		$x_0$ is the unique zero of $F_\mu$ having minimal modulus;
   		
   		\item
   		the reciprocal function
   		\[
   		G_\mu(z)=\frac1{F_\mu(z)}
   		\]
   		has no singularities in
   		\[
   		\{z\in\Omega:\ |z|\le |x_0|\}
   		\]
   		other than the simple pole at $x_0$.
   	\end{enumerate}
   	
   	Then
   	\[
   	m_n
   	\sim
   	\frac{x_0^n}{F_\mu'(x_0)},
   	\qquad
   	n\to\infty.
   	\]
   	
   \end{proposition}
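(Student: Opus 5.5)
We encode the moments in a generating function and read off its coefficients via a pole expansion. Since $\mu$ is compactly supported, the moments $m_n$ are the coefficients of the expansion of $G_\mu$ at infinity:
\[
G_\mu(z)=\sum_{n\ge0}\frac{m_n}{z^{n+1}},\qquad |z|>R,
\]
where $R$ bounds the support. Set $w=1/z$ and consider the moment generating function
\[
M(w)=\sum_{n\ge0}m_nw^n=\frac1w\,G_\mu\!\left(\frac1w\right)=\frac{1}{w\,F_\mu(1/w)}.
\]
This converts the question into the study of the radius of convergence and the dominant singularity of $M$ at the origin.

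\textbf{Locating the dominant singularity.} By the hypotheses, $F_\mu$ is analytic on $\Omega$, which contains a neighbourhood of infinity. By condition~3, $G_\mu=1/F_\mu$ has no singularity in $\{z\in\Omega:|z|\le|x_0|\}$ other than the pole at $x_0$. Hence $M$ is analytic in the disc $|w|<1/|x_0|$ and has, on the circle $|w|=1/|x_0|$, the single singularity $w_0=1/x_0$; this uses condition~2 to exclude other zeros on that circle. Near $x_0$ we have $F_\mu(z)=F_\mu'(x_0)(z-x_0)+O((z-x_0)^2)$, so
\[
G_\mu(z)=\frac{1}{F_\mu'(x_0)(z-x_0)}+h(z)
\]
with $h$ analytic near $x_0$. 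Translating to $w$, using $z-x_0=(1-wx_0)/w$, gives
\[
M(w)=\frac{1}{F_\mu'(x_0)}\cdot\frac{1}{1-x_0w}+H(w),
\]
where $H$ is analytic on a disc $|w|<\rho$ with $\rho>1/|x_0|$. Extending the disc this way is possible because the singularities of $M$ form a closed set, and only $w_0$ lies on the boundary circle.

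\textbf{Extracting the asymptotics.} The coefficients of the pole term are exactly $x_0^n/F_\mu'(x_0)$. By Cauchy's estimate, the coefficients of $H$ are $O(r^{-n})$ for any $r<\rho$; choosing $1/|x_0|<r<\rho$ makes them $o(|x_0|^n)$. Therefore $m_n=x_0^n/F_\mu'(x_0)+o(|x_0|^n)$, which yields $m_n\sim x_0^n/F_\mu'(x_0)$. This is the simplest instance of the transfer theorem of \cite{FlajoletSedgewick2009}.

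\textbf{Main obstacle.} The delicate step is justifying that $M$ continues analytically to a strictly larger disc. The hypotheses are stated in the $z$-plane relative to $\Omega$, so one must check that the region $\{|z|\ge|x_0|-\varepsilon\}$ minus a neighbourhood of $x_0$ lies in $\Omega$, and that $F_\mu$ has no zeros there. I would read conditions 2 and 3 as asserting precisely this: $G_\mu$ is meromorphic on $\{|z|>|x_0|-\varepsilon\}$ with only the pole $x_0$ on or outside $|z|=|x_0|$. I would also note explicitly that the minimal-modulus zero must be the one \emph{closest to infinity} in the $w$ picture, i.e.\ $x_0$ maximizes $|z|$ among the singularities, and interpret the conditions accordingly.

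There are two further caveats. First, for a genuine probability measure $G_\mu$ has singularities on the support, and $m_n\le R^n$, so $x_0$ must lie outside or at the edge of the support. Second, for symmetric measures $-x_0$ is also a pole, which condition~2 excludes. I would flag these points but otherwise argue as above.
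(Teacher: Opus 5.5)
Your step ``By condition~3, \dots\ Hence $M$ is analytic in the disc $|w|<1/|x_0|$'' does not follow. Under $w=1/z$ that disc is the image of $\{|z|>|x_0|\}$, whereas conditions~2 and~3 only restrict zeros and singularities in $\{|z|\le|x_0|\}$. They say nothing about the region that governs the growth of $m_n$. A generous reading of the hypotheses cannot fix this, because the proposition as written is false. Take $\mu=\frac12(\delta_1+\delta_2)$, so that
\[
G_\mu(z)=\frac{2z-3}{2(z-1)(z-2)},\qquad F_\mu(z)=\frac{2(z-1)(z-2)}{2z-3}.
\]
Then:
\begin{itemize}
\item $F_\mu$ is analytic on $\Omega=\mathbb{C}\setminus\{3/2\}$;
\item $x_0=1$ is its unique zero of minimal modulus, with $F_\mu'(1)=2$;
\item $G_\mu$ has no singularity in $\{|z|\le1\}$ other than the pole at $1$.
\end{itemize}
Yet $m_n=(1+2^n)/2\sim 2^{n-1}$, not $1/2$. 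Concretely, $M(w)=\frac12\bigl(\frac{1}{1-w}+\frac{1}{1-2w}\bigr)$ has a pole at $w=1/2$ inside $|w|<1$. The paper's proof has the same structure as yours and makes the same inversion: it asserts that the pole of $G_\mu$ of minimal modulus is a dominant pole of $H$. So it does not supply the missing step either.

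Your ``Main obstacle'' paragraph and first caveat essentially diagnose this. Indeed $F_\mu(x_0)=0$ makes $x_0$ an atom, and the conclusion forces $|x_0|=\max\{|x|:x\in\operatorname{supp}\mu\}$, the opposite of condition~2. But you should present this as a correction of the statement rather than an interpretation of it. Condition~2 must say \emph{maximal} modulus, and condition~3 must concern $\{|z|\ge|x_0|\}$, with that set contained in $\Omega$. Under these corrected hypotheses your pole-plus-Cauchy-estimate argument is sound.

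In fact the corrected hypotheses force $\mu=p\,\delta_{x_0}+\nu$ with $p=1/F_\mu'(x_0)$ and $\operatorname{supp}\nu\subset\{|x|\le r\}$ for some $r<|x_0|$. This follows from Stieltjes inversion: $\mu$ carries no mass where $G_\mu$ continues analytically across the real axis. So $m_n=p\,x_0^n+O(r^n)$ follows with no singularity analysis at all. Finally, as your second caveat notes, uniqueness of $x_0$ excludes every symmetric measure. Even the corrected proposition can therefore never apply to $\mu_t$, whose odd moments vanish.
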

   
\begin{proof}
	Since $F_\mu'(x_0)\neq0$, the function $F_\mu$ has a simple zero at
	$x_0$. Hence
	\[
	G_\mu(z)=\frac1{F_\mu(z)}
	\]
	has a simple pole at $x_0$ with residue
	\[
	\operatorname{Res}(G_\mu;x_0)
	=
	\frac1{F_\mu'(x_0)}.
	\]
	
	By assumption, this pole is the unique singularity of $G_\mu$
	of minimal modulus. Define
	\[
	H(w)
	=
	\frac1w
	G_\mu\!\left(\frac1w\right)
	=
	\sum_{n=0}^{\infty}m_nw^n.
	\]
	Then $H$ has a unique dominant simple pole at
	\[
	w_0=\frac1{x_0}.
	\]
	
	Using the residue computed above,
	\[
	H(w)
	=
	-\frac{1}{x_0F_\mu'(x_0)}
	\frac1{w-w_0}
	+
	R(w),
	\]
	where $R$ is analytic in a disc
	\[
	|w|<r,
	\qquad
	r>|w_0|.
	\]
	Since
	\[
	-\frac{1}{w-w_0}
	=
	\frac1{w_0}\,
	\frac1{1-w/w_0},
	\]
	and $w_0=1/x_0$, we obtain
	\[
	H(w)
	=
	\frac1{F_\mu'(x_0)}
	\frac1{1-w/w_0}
	+
	R(w).
	\]
	
	Therefore
	\[
	\frac1{F_\mu'(x_0)}
	\frac1{1-w/w_0}
	=
	\frac1{F_\mu'(x_0)}
	\sum_{n=0}^{\infty}
	w_0^{-n}w^n,
	\]
	while Cauchy's coefficient estimate gives
	\[
	[w^n]R(w)=O(r^{-n}).
	\]
	Hence
	\[
	m_n
	=
	\frac1{F_\mu'(x_0)}
	w_0^{-n}
	+
	O(r^{-n})
	=
	\frac{x_0^n}{F_\mu'(x_0)}
	+
	O(r^{-n}).
	\]
	Since $r>|w_0|=1/|x_0|$, it follows that
	\[
	m_n
	\sim
	\frac{x_0^n}{F_\mu'(x_0)}.
	\]
\end{proof}
   \begin{remark}
   	For the family
   	\[
   	\mu_t=\beta_t\uplus U_t,
   	\]
   	the reciprocal Cauchy transform
   	\[
   	F_{\mu_t}(z)
   	=
   	\frac{t}
   	{\log\!\left(\dfrac{z+t/2}{z-t/2}\right)}
   	-\frac tz
   	\]
   	is analytic on the slit plane
   	\[
   	\mathbb C\setminus[-t/2,t/2].
   	\]
   	Therefore Proposition~\ref{prop:dominantzero} applies provided one
   	establishes the existence of a unique simple zero of
   	$F_{\mu_t}$ having minimal modulus.
   	Determining this zero and its dependence on the parameter $t$
   	appears to be a nontrivial analytic problem and is left for future
   	investigation.
   \end{remark}
   \section*{Scaling Limits and Asymptotic Behaviour}
   
   We now investigate the asymptotic behaviour of the family
   $\{\mu_t\}_{t>0}$ under natural scaling transformations. The explicit
   representation of the reciprocal Cauchy transform enables us to identify the
   limiting distributions in different scaling regimes and to establish the
   stability of these limits. We first consider the large-scale regime
   $t\to\infty$, where the contribution of the Bernoulli component becomes
   negligible after normalization.
   
   \begin{theorem} \label{thm:large_limit}
   	As \(t\to\infty\),
   	\[
   	D_{1/t}\mu_t
   	\Longrightarrow
   	U_1.
   	\]
   \end{theorem}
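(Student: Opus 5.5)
The plan is to work with reciprocal Cauchy transforms and to use the standard criterion that weak convergence of probability measures is equivalent to locally uniform (or pointwise) convergence of Cauchy transforms on $\mathbb C^{+}$. Equivalently, for reciprocal transforms it suffices to show pointwise convergence of $F$ on $\mathbb C^{+}$ to the reciprocal transform of a probability measure, since $F\neq0$ on $\mathbb C^{+}$. By the scaling relation from the Preliminaries with $c=1/t$,
\[
F_{D_{1/t}\mu_t}(z)=\frac1t\,F_{\mu_t}(tz).
\]
The explicit formula in Theorem~\ref{thm:supportcriterion} gives $F_{\mu_t}(w)=t/\log\frac{w+t/2}{w-t/2}-t/w$. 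Substituting $w=tz$, the ratio becomes $\frac{tz+t/2}{tz-t/2}=\frac{z+1/2}{z-1/2}$, which no longer depends on $t$. Hence
\[
F_{D_{1/t}\mu_t}(z)=\frac{1}{\log\!\left(\dfrac{z+1/2}{z-1/2}\right)}-\frac{1}{t^{2}z}.
\]

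Fix $z\in\mathbb C^{+}$. The second term tends to $0$ as $t\to\infty$, and the convergence is uniform on compact subsets of $\mathbb C^{+}$. The first term equals $1/G_{U_1}(z)=F_{U_1}(z)$, by the formula $G_{U_t}(z)=\frac1t\log\frac{z+t/2}{z-t/2}$ used in the proof of Proposition~\ref{prop:Ktrans}, specialized to $t=1$. Therefore $F_{D_{1/t}\mu_t}\to F_{U_1}$ locally uniformly on $\mathbb C^{+}$. Since $F_{U_1}$ does not vanish on $\mathbb C^{+}$, we also get $G_{D_{1/t}\mu_t}\to G_{U_1}$ locally uniformly there.

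The main step needing care is passing from convergence of transforms to weak convergence. Pointwise convergence of Cauchy transforms on $\mathbb C^{+}$ gives only vague convergence unless tightness is known. I would obtain tightness in either of two ways. The first uses the limit: the limit $G_{U_1}$ is itself the Cauchy transform of a probability measure, so $iyG(iy)\to1$ as $y\to\infty$, and the standard theorem (e.g. Bercovici--Voiculescu, or the criterion that $G_n\to G_\mu$ on $\mathbb C^{+}$ with $\mu$ a probability measure forces $\mu_n\Rightarrow\mu$) applies directly. The second is a moment bound: by Theorem~\ref{thm:booleancumulants}, $D_{1/t}\mu_t$ has Boolean variance $b_2=(t+t^2/12)/t^2\to1/12$ and mean $0$. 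Its variance equals its second moment $m_2=b_2$, which stays bounded, so the family is tight by Chebyshev's inequality. Either route upgrades the convergence of transforms to $D_{1/t}\mu_t\Rightarrow U_1$.
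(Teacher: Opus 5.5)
Your proposal is correct and follows essentially the paper's route: the paper rescales $K_{\mu_t}$ rather than $F_{\mu_t}=z-K_{\mu_t}$, uses the same observation that the logarithmic term becomes independent of $t$ after substituting $w=tz$, and simply cites the continuity theorem for Boolean transforms where you spell out the Cauchy-transform and tightness argument. One harmless slip: $\frac1t F_{\mu_t}(tz)=1/\log\frac{z+1/2}{z-1/2}-\frac{1}{tz}$, not $-\frac{1}{t^2 z}$ (your own value $b_2=\frac1t+\frac1{12}$ agrees with $\frac{1}{tz}$), and this term still vanishes locally uniformly as $t\to\infty$.
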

   
   \begin{proof}
   	Using the scaling identity
   	\[
   	K_{D_c\mu}(z)
   	=
   	cK_\mu\!\left(\frac{z}{c}\right),
   	\]
   	with \(c=1/t\),
   	\[
   	K_{D_{1/t}\mu_t}(z)
   	=
   	\frac1tK_t(tz).
   	\]
   	Substituting the explicit expression for \(K_{\mu_t}\),
   	\[
   	K_{D_{1/t}\mu_t}(z)
   	=
   	z+\frac1{tz}
   	-
   	\frac1
   	{\log\!\left(
   		\dfrac{z+1/2}{z-1/2}
   		\right)}.
   	\]
   	Letting \(t\to\infty\),
   	\[
   	K_{D_{1/t}\mu_t}(z)
   	\longrightarrow
   	z-
   	\frac1
   	{\log\!\left(
   		\dfrac{z+1/2}{z-1/2}
   		\right)}
   	=
   	K_{U_1}(z).
   	\]
   	Since convergence of Boolean energy transforms implies weak convergence (\cite[Section~3]{Franz2009}),
   	\[
   	D_{1/t}\mu_t
   	\Longrightarrow
   	U_1.
   	\]
   \end{proof}
   The previous theorem shows that, after normalization by $t$, the family
   $\{\mu_t\}_{t>0}$ converges to the uniform distribution as the parameter
   becomes large. We now turn to the opposite asymptotic regime. By rescaling
   with the natural factor $\sqrt{t}$, we recover the Bernoulli distribution,
   demonstrating that the family interpolates between its two constituent
   measures under appropriate dilations.
   \begin{theorem}\label{thm:smalllimit}
   	As \(t\downarrow0\),
   	\[
   	D_{1/\sqrt t}\mu_t
   	\Longrightarrow
   	\beta_1,
   	\]
   	where
   	\[
   	\beta_1=\frac12(\delta_{-1}+\delta_1).
   	\]
   \end{theorem}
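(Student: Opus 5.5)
We follow the same route as Theorem~\ref{thm:large_limit}: compute the Boolean energy transform of the dilated measure, pass to the limit pointwise on $\mathbb C^{+}$, and conclude by the continuity theorem for $K$ \cite[Section~3]{Franz2009}. By the scaling relation $F_{D_c\mu}(z)=cF_\mu(z/c)$ we have $K_{D_c\mu}(z)=cK_\mu(z/c)$. With $c=1/\sqrt t$, Proposition~\ref{prop:Ktrans} gives
\[
K_{D_{1/\sqrt t}\mu_t}(z)=\frac1{\sqrt t}K_{\mu_t}(\sqrt t\,z)
= z+\frac1z-\frac{\sqrt t}{\log\!\left(\dfrac{\sqrt t z+t/2}{\sqrt t z-t/2}\right)}
= z+\frac1z-\frac{\sqrt t}{\log\!\left(\dfrac{1+u}{1-u}\right)},
\]
where $u=\sqrt t/(2z)$. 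Equivalently, by linearity, this is $K_{\beta_1}(z)+K_{D_{1/\sqrt t}U_t}(z)$, since $D_{1/\sqrt t}\beta_t=\beta_1$.

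The key step is to show that the uniform contribution tends to $0$ for each fixed $z\in\mathbb C^{+}$. The measure $D_{1/\sqrt t}U_t$ is $U_{\sqrt t}$, uniform on $[-\sqrt t/2,\sqrt t/2]$. Writing $\log\frac{1+u}{1-u}=2\operatorname{artanh}(u)$ as in the proof of Theorem~\ref{thm:uniformexpansion}, we get
\[
\frac{\sqrt t}{2\operatorname{artanh}(u)}=z\,\frac{u}{\operatorname{artanh}(u)}.
\]
For fixed $z\in\mathbb C^{+}$ and $t\downarrow0$ we have $u\to0$, and $u/\operatorname{artanh}(u)=1-u^2/3+O(u^4)$ is analytic near $0$. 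Hence
\[
K_{U_{\sqrt t}}(z)=z-z\Bigl(1-\tfrac{u^2}{3}+O(u^4)\Bigr)=\frac{t}{12z}+O(t^2)\longrightarrow0 .
\]
This is exactly the first term of Theorem~\ref{thm:uniformexpansion} with $t$ replaced by $\sqrt t$. It follows that $K_{D_{1/\sqrt t}\mu_t}(z)\to z^{-1}=K_{\beta_1}(z)$ pointwise on $\mathbb C^{+}$.

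The main obstacle is justifying that this pointwise convergence implies weak convergence. We would invoke the same continuity criterion of \cite{Franz2009} used for Theorem~\ref{thm:large_limit}. To be safe, we would also check convergence locally uniformly on $\mathbb C^{+}$, and uniformly on a truncated cone $\{|\Re z|<\Im z,\ \Im z>1\}$. This holds because $|u|\le\sqrt t/(2\Im z)$ is uniformly small there and the error $O(|z||u|^2)=O(t/|z|)$ is uniform. Alternatively, the principal branch is unproblematic: $(1+u)/(1-u)$ stays near $1$.

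A direct alternative avoids the continuity theorem. The measures $D_{1/\sqrt t}\mu_t$ have second moment $b_2=1+t/12$ by Theorem~\ref{thm:booleancumulants}, so the family is tight. Moreover $G=1/(z-K)\to 1/(z-1/z)=G_{\beta_1}$ pointwise, and convergence of Cauchy transforms on $\mathbb C^{+}$ together with tightness yields weak convergence.
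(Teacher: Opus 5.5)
Your proposal is correct and follows the paper's proof: the scaling identity $K_{D_c\mu}(z)=cK_\mu(z/c)$, then the expansion of the logarithm (equivalently of $u/\operatorname{artanh}(u)$) showing that the uniform contribution is $t/(12z)+O(t^2)\to 0$, and finally the continuity theorem for Boolean energy transforms. Your extra remarks, namely the uniform estimate on a truncated cone and the alternative argument via the bounded second moment $1+t/12$ together with pointwise convergence of Cauchy transforms, are valid, and they make the last step more robust than the paper's bare citation of \cite{Franz2009}.
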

   
   \begin{proof}
   	Recall that the Boolean cumulant transform satisfies the scaling identity
   	\[
   	K_{D_c\mu}(z)
   	=
   	c\,K_\mu\!\left(\frac{z}{c}\right),
   	\qquad c>0.
   	\]
   	Choosing \(c=t^{-1/2}\), we obtain
   	\[
   	K_{D_{1/\sqrt t}\mu_t}(z)
   	=
   	\frac1{\sqrt t}\,
   	K_{\mu_t}(\sqrt t\,z).
   	\]
   	Using the explicit formula for \(K_{\mu_t}\),
   	\[
   	K_{\mu_t}(z)
   	=
   	z+\frac tz
   	-
   	\frac{t}
   	{\log\!\left(\dfrac{z+t/2}{z-t/2}\right)},
   	\]
   	we obtain
   	\[
   	K_{D_{1/\sqrt t}\mu_t}(z)
   	=
   	z+\frac1z
   	-
   	\frac{\sqrt t}
   	{\log\!\left(
   		\dfrac{z+\sqrt t/2}{z-\sqrt t/2}
   		\right)}.
   	\]
   	
   	Now write
   	\[
   	a=\frac{\sqrt t}{2z}.
   	\]
   	Since
   	\[
   	\log\!\left(\frac{1+a}{1-a}\right)
   	=
   	2\left(a+\frac{a^3}{3}+O(a^5)\right),
   	\]
   	it follows that
   	\[
   	\log\!\left(
   	\frac{z+\sqrt t/2}{z-\sqrt t/2}
   	\right)
   	=
   	\frac{\sqrt t}{z}
   	+\frac{t^{3/2}}{12z^3}
   	+O(t^{5/2}).
   	\]
   	Hence
   	\[
   	\frac{\sqrt t}
   	{\log\!\left(
   		\dfrac{z+\sqrt t/2}{z-\sqrt t/2}
   		\right)}
   	=
   	z+O(t),
   	\]
   	and therefore
   	\[
   	K_{D_{1/\sqrt t}\mu_t}(z)
   	=
   	\frac1z+O(t).
   	\]
   	Consequently,
   	\[
   	K_{D_{1/\sqrt t}\mu_t}(z)
   	\longrightarrow
   	\frac1z,
   	\qquad z\in\mathbb C^+.
   	\]
   	
   	Finally,
   	\[
   	G_{\beta_1}(z)
   	=
   	\frac12\left(\frac1{z-1}+\frac1{z+1}\right)
   	=
   	\frac{z}{z^2-1},
   	\]
   	so that
   	\[
   	F_{\beta_1}(z)
   	=
   	\frac{z^2-1}{z}
   	=
   	z-\frac1z,
   	\]
   	and therefore
   	\[
   	K_{\beta_1}(z)
   	=
   	z-F_{\beta_1}(z)
   	=
   	\frac1z.
   	\]
   	Hence
   	\[
   	K_{D_{1/\sqrt t}\mu_t}(z)
   	\longrightarrow
   	K_{\beta_1}(z).
   	\]
   	Since convergence of Boolean energy transforms is equivalent to weak convergence of probability measures (\cite[Section~3]{Franz2009}), we conclude that
   	\[
   	D_{1/\sqrt t}\mu_t
   	\Longrightarrow
   	\beta_1.
   	\]
   	This completes the proof.
   \end{proof}
   The preceding two theorems identify the canonical scaling limits of the family
   $\{\mu_t\}_{t>0}$ in the large and small parameter regimes. We conclude this
   section by showing that these limiting behaviours are stable under
   asymptotically equivalent normalizations. More precisely, replacing the
   canonical scaling factors by any asymptotically proportional ones yields the
   same limiting distributions up to the corresponding change of scale.
  \begin{theorem}[Stability of the two scaling regimes]
  	Let
  	\[
  	\mu_t=\beta_t\uplus U_t,
  	\]
  	and let
  	\[
  	\nu_t=D_{a(t)}\mu_t,
  	\]
  	where \(a(t)>0\).
  	
  	\begin{enumerate}
  		\item
  		Suppose that
  		\[
  		a(t)=\frac{c+o(1)}{t},
  		\qquad c>0,
  		\]
  		as \(t\to\infty\).
  		Then
  		\[
  		\nu_t
  		\Longrightarrow
  		U_c,
  		\]
  		where
  		\[
  		U_c=\operatorname{Unif}\!\left[-\frac c2,\frac c2\right].
  		\]
  		
  		\item
  		Suppose that
  		\[
  		a(t)=\frac{c+o(1)}{\sqrt t},
  		\qquad c>0,
  		\]
  		as \(t\downarrow0\).
  		Then
  		\[
  		\nu_t
  		\Longrightarrow
  		\beta_c,
  		\]
  		where
  		\[
  		\beta_c=\frac12(\delta_{-c}+\delta_c).
  		\]
  	\end{enumerate}
  \end{theorem}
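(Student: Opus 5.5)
The plan is to reduce both parts to Theorems~\ref{thm:large_limit} and~\ref{thm:smalllimit} by factoring the dilation. In part~1, write
\[
a(t)=\frac{c_t}{t},\qquad c_t:=t\,a(t)\to c .
\]
Since $D_{\alpha}D_{\beta}=D_{\alpha\beta}$, this gives
\[
\nu_t=D_{c_t}\bigl(D_{1/t}\mu_t\bigr).
\]
In part~2, write $a(t)=c_t/\sqrt t$ with $c_t:=\sqrt t\,a(t)\to c$, so that
\[
\nu_t=D_{c_t}\bigl(D_{1/\sqrt t}\mu_t\bigr).
\]
By the earlier theorems, the inner measures converge weakly to $U_1$ and to $\beta_1$, respectively.

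It then remains to prove the following lemma: if $\rho_t\Rightarrow\rho$ and $c_t\to c>0$, then $D_{c_t}\rho_t\Rightarrow D_c\rho$. The direct route goes through test functions. For bounded continuous $f$,
\[
\int f\,dD_{c_t}\rho_t=\int f(c_tx)\,d\rho_t(x).
\]
We compare this with $\int f(cx)\,d\rho_t(x)$, which converges to $\int f(cx)\,d\rho(x)=\int f\,dD_c\rho$.

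The difference between the two integrals is controlled using tightness of $\{\rho_t\}$, which holds because the family is weakly convergent. Fix $\varepsilon>0$ and choose $M$ with $\rho_t([-M,M]^c)<\varepsilon$ for all $t$ near the limit. On $[-M,M]$, uniform continuity of $f$ on the compact set $[-2cM,2cM]$ gives $|f(c_tx)-f(cx)|<\varepsilon$ once $|c_t-c|$ is small. Off $[-M,M]$ the difference is at most $2\|f\|_\infty\varepsilon$. This proves the lemma.

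Finally, $D_cU_1=U_c$ with $U_c=\mathrm{Unif}[-c/2,c/2]$, and $D_c\beta_1=\frac12(\delta_{-c}+\delta_c)=\beta_c$. These identifications close both parts.

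As an alternative consistent with the paper's transform methods, one could use $K_{D_{a}\mu}(z)=aK_\mu(z/a)$ directly:
\[
K_{\nu_t}(z)=c_t\,K_{D_{1/t}\mu_t}(z/c_t).
\]
Locally uniform convergence of the inner transforms on $\mathbb C^+$ together with $z/c_t\to z/c$ would then give
\[
K_{\nu_t}(z)\to cK_{U_1}(z/c)=K_{U_c}(z),
\]
and similarly in the Bernoulli case. I expect the only delicate point to be the justification of uniformity: either pointwise convergence with moving argument $z/c_t$ needs locally uniform convergence, which follows from the explicit formulas, or one needs tightness in the test-function argument. For that reason I will present the test-function lemma as the main argument, since it avoids verifying uniform convergence of the transforms.
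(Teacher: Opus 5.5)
Your proof is correct, but it takes a different route from the paper's.

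\textbf{The paper's route.} The paper does not factor the dilation. It substitutes the explicit formula for $K_{\mu_t}$ into $K_{D_a\mu}(z)=aK_\mu(z/a)$, which gives
\[
K_{\nu_t}(z)=z+\frac{a(t)^2t}{z}-\frac{a(t)t}{\log\left(\frac{z+a(t)t/2}{z-a(t)t/2}\right)}.
\]
It then uses the fact that in the first regime $a(t)t\to c$ and $a(t)^2t\to0$. In the second regime $a(t)t\to0$ and $a(t)^2t\to c^2$. Finally it invokes the continuity theorem for Boolean energy transforms once more.

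\textbf{Your route.} You write $D_{a(t)}=D_{c_t}D_{1/t}$ (respectively $D_{c_t}D_{1/\sqrt t}$). You take the canonical limits of Theorems~\ref{thm:large_limit} and~\ref{thm:smalllimit} as black boxes. You then close with the elementary fact that $(c,\rho)\mapsto D_c\rho$ is jointly continuous for weak convergence, proved from eventual tightness and uniform continuity of $f$ on compacts.

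\textbf{What each buys.} Your route shows that stability is a formal consequence of the two canonical limits. The argument applies verbatim to any family with such limits, and needs no further transform analysis. It also avoids the moving-argument uniformity issue you correctly flag. The paper's route stays inside the transform calculus and makes visible which term survives in each regime: the uniform logarithmic term in the first, the Bernoulli term $a(t)^2t/z$ in the second. The cost is re-running the limit computation, which the paper only sketches ("proceeding exactly as in\ldots").

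\textbf{Identifications.} Your identifications $D_cU_1=U_c$ and $D_c\beta_1=\frac12(\delta_{-c}+\delta_c)$ use the theorem's own definition of $\beta_c$. That definition differs from the variance-$t$ convention for $\beta_t$ in the Preliminaries. It is consistent with the paper's limit $K_{\beta_c}(z)=c^2/z$.
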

  
  \begin{proof}
  	The Boolean energy transform satisfies the scaling identity
  	\[
  	K_{D_a\mu}(z)
  	=
  	aK_\mu\!\left(\frac za\right).
  	\]
  	
  	For the first assertion, let
  	\[
  	a(t)=\frac{c+o(1)}{t},
  	\qquad t\to\infty.
  	\]
  	Then
  	\[
  	K_{\nu_t}(z)
  	=
  	a(t)K_{\mu_t}\!\left(\frac z{a(t)}\right).
  	\]
  	Substituting the explicit formula for \(K_{\mu_t}\) gives
  	\[
  	K_{\nu_t}(z)
  	=
  	z+\frac{a(t)^2t}{z}
  	-
  	a(t)
  	\frac{t}
  	{\log\!\left(
  		\dfrac{z/a(t)+t/2}{z/a(t)-t/2}
  		\right)}.
  	\]
  	Since
  	\[
  	a(t)t\longrightarrow c,
  	\]
  	it follows exactly as in the proof of
  	Theorem~\ref{thm:large_limit}
  	that
  	\[
  	K_{\nu_t}(z)
  	\longrightarrow
  	K_{U_c}(z)
  	=
  	z-
  	\frac{c}
  	{\log\!\left(
  		\dfrac{z+c/2}{z-c/2}
  		\right)}.
  	\]
  	Hence
  	\[
  	\nu_t
  	\Longrightarrow
  	U_c.
  	\]
  	
  	For the second assertion, let
  	\[
  	a(t)=\frac{c+o(1)}{\sqrt t},
  	\qquad t\downarrow0.
  	\]
  	Again,
  	\[
  	K_{\nu_t}(z)
  	=
  	a(t)K_{\mu_t}(z/a(t)).
  	\]
  	Proceeding exactly as in the proof of
  	Theorem~\ref{thm:smalllimit},
  	one obtains
  	\[
  	K_{\nu_t}(z)
  	\longrightarrow
  	\frac{c^2}{z}
  	=
  	K_{\beta_c}(z),
  	\]
  	since
  	\[
  	K_{\beta_c}(z)=\frac{c^2}{z}.
  	\]
  	Therefore
  	\[
  	\nu_t
  	\Longrightarrow
  	\beta_c.
  	\]
  	
  	The conclusion follows from the continuity theorem for Boolean energy transforms.
  \end{proof}
  \begin{conjecture}[Complete Scaling Classification]
  	\label{conj:scaling}
  	Let
  	\[
  	\mu_t=\beta_t\uplus U_t,
  	\]
  	and let \(a:(0,\infty)\to(0,\infty)\) be a scaling function.
  	Suppose that the family of dilated measures
  	\[
  	D_{a(t)}\mu_t
  	\]
  	converges weakly to a non-degenerate probability measure \(\nu\).
  	
  	Then exactly one of the following alternatives holds.
  	
  	\begin{enumerate}
  		\item[(i)]
  		There exists \(c>0\) such that
  		\[
  		a(t)t\longrightarrow c
  		\qquad (t\to\infty),
  		\]
  		and
  		\[
  		\nu=U_c,
  		\]
  		where \(U_c\) denotes the uniform distribution on
  		\([-\frac c2,\frac c2]\).
  		
  		\item[(ii)]
  		There exists \(c>0\) such that
  		\[
  		a(t)\sqrt{t}\longrightarrow c
  		\qquad (t\downarrow0),
  		\]
  		and
  		\[
  		\nu=\beta_c,
  		\]
  		where
  		\[
  		\beta_c=\frac12(\delta_{-c}+\delta_c).
  		\]
  	\end{enumerate}
  	
  	Consequently, up to dilations, the only non-degenerate weak scaling limits of the family
  	\(\{\mu_t:t>0\}\)
  	are the uniform distribution and the symmetric Bernoulli distribution.
  \end{conjecture}
  Theorems~\ref{thm:large_limit} and \ref{thm:smalllimit} establish the two canonical scaling limits corresponding to the choices
  \[
  a(t)=\frac1t
  \quad\text{and}\quad
  a(t)=\frac1{\sqrt t},
  \]
  respectively.
  The conjecture asserts that these are, up to multiplicative constants, the only possible deterministic normalizations leading to non-degenerate weak limits. Equivalently, every non-trivial scaling limit of the family
  \(\{\mu_t\}\)
  is either a dilation of the uniform distribution or a dilation of the symmetric Bernoulli distribution.
  
  The appearance of the Boolean Gaussian (Bernoulli) law as the scaling limit
  suggests a possible connection with Boolean random matrix models, where
  Bernoulli-type distributions arise as limiting laws of Boolean Wigner
  ensembles. It is therefore natural to ask whether the family
  $\mu_t=\beta_t\uplus U_t$ can also be realized as the limiting spectral
  distribution of an appropriately deformed Boolean random matrix ensemble. The
  following theorem provides a conditional answer to this question.
  \begin{theorem}[Universality under a Boolean matrix realization] \label{thm:rmc}
  	Suppose that there exist sequences of Hermitian random matrices
  	$\{X_N^{(t)}\}_{N\ge1}$ and deterministic Hermitian matrices
  	$\{D_N^{(t)}\}_{N\ge1}$ satisfying
  	
  	\begin{enumerate}
  		\item[(i)]
  		The empirical spectral distribution of $X_N^{(t)}$ converges weakly to the
  		centered Bernoulli distribution $\beta_t$;
  		
  		\item[(ii)]
  		The empirical spectral distribution of $D_N^{(t)}$ converges weakly to the
  		uniform distribution $U_t$;
  		
  		\item[(iii)]
  		The pair $(X_N^{(t)},D_N^{(t)})$ is asymptotically Boolean independent in the
  		sense that
  		\[
  		\mu_{X_N^{(t)}+D_N^{(t)}}
  		\Longrightarrow
  		\mu_{X_N^{(t)}}\uplus
  		\mu_{D_N^{(t)}}.
  		\]
  		
  	\end{enumerate}
  	
  	Then
  	\[
  	\mu_{X_N^{(t)}+D_N^{(t)}}
  	\Longrightarrow
  	\mu_t,
  	\]
  	where
  	\[
  	\mu_t=\beta_t\uplus U_t.
  	\]
  \end{theorem}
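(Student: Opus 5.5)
The plan is to reduce the statement to the weak continuity of Boolean additive convolution, combined with hypotheses (i)--(iii). Write $\alpha_N=\mu_{X_N^{(t)}}$, $\delta_N=\mu_{D_N^{(t)}}$ and $\sigma_N=\mu_{X_N^{(t)}+D_N^{(t)}}$. Hypotheses (i) and (ii) give $\alpha_N\Rightarrow\beta_t$ and $\delta_N\Rightarrow U_t$. Hypothesis (iii) should be read as saying that $\sigma_N$ and $\alpha_N\uplus\delta_N$ have the same weak limiting behaviour; more precisely, that their difference tends to zero in a metric metrizing weak convergence, such as the L\'evy metric. The statement is otherwise ambiguous, since the right-hand side depends on $N$. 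The proof therefore splits into two steps. First, $\alpha_N\uplus\delta_N\Rightarrow\beta_t\uplus U_t=\mu_t$. Second, the asymptotic equivalence of $\sigma_N$ and $\alpha_N\uplus\delta_N$ transfers this limit to $\sigma_N$.

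For the first step I will work with Boolean energy transforms, as in the proofs of Theorems~\ref{thm:large_limit} and \ref{thm:smalllimit}.

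\begin{itemize}
\item Weak convergence $\alpha_N\Rightarrow\beta_t$ implies $G_{\alpha_N}\to G_{\beta_t}$ locally uniformly on $\mathbb C^+$, since the integrands $1/(z-x)$ are bounded and continuous in $x$ for fixed $z$.
\item $G_{\beta_t}$ does not vanish on $\mathbb C^+$, because $\Im G_{\beta_t}(z)<0$ there. Hence $F_{\alpha_N}\to F_{\beta_t}$ locally uniformly, and so $K_{\alpha_N}\to K_{\beta_t}=t/z$.
\item The same argument gives $K_{\delta_N}\to K_{U_t}$.
\item By the linearization identity, $K_{\alpha_N\uplus\delta_N}=K_{\alpha_N}+K_{\delta_N}\to K_{\beta_t}+K_{U_t}=K_{\mu_t}$, which is the formula of Proposition~\ref{prop:Ktrans}.
\end{itemize}

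Convergence of Boolean energy transforms then yields $\alpha_N\uplus\delta_N\Rightarrow\mu_t$, by the continuity theorem \cite[Section~3]{Franz2009} already used in the paper.

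The main obstacle is this final inversion: pointwise convergence of $K$ on $\mathbb C^+$ implies weak convergence only with tightness, or only when the limit is known to be a probability measure. I would first handle it directly. Locally uniform convergence of $K$ gives locally uniform convergence of $F=z-K$, hence of $G=1/F$, because $F_{\mu_t}$ does not vanish on $\mathbb C^+$. Convergence of Cauchy transforms on $\mathbb C^+$ gives vague convergence. Tightness follows because $iyG_{\sigma}(iy)\to1$ as $y\to\infty$ uniformly along the sequence. That uniformity follows from $K_{\alpha_N}(iy)/(iy)\to0$ and $K_{\delta_N}(iy)/(iy)\to0$ uniformly in $N$, which in turn comes from the tightness of the convergent sequences $\{\alpha_N\}$ and $\{\delta_N\}$. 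Alternatively, one can cite the known fact that $\uplus$ is weakly continuous in each pair of arguments.

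For the second step, if $d$ denotes the L\'evy metric, then
\[
d(\sigma_N,\mu_t)\le d(\sigma_N,\alpha_N\uplus\delta_N)+d(\alpha_N\uplus\delta_N,\mu_t)\to0 .
\]
Hence $\sigma_N\Rightarrow\mu_t$, which completes the proof.
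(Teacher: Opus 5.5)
Your proposal is correct and follows the same route as the paper. You combine (i)--(ii) with the weak continuity of $\uplus$ to get $\mu_{X_N^{(t)}}\uplus\mu_{D_N^{(t)}}\Rightarrow\beta_t\uplus U_t=\mu_t$, and then transfer this limit to $\mu_{X_N^{(t)}+D_N^{(t)}}$ via (iii).

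You are more careful than the paper in two respects. You make the L\'evy-metric reading of (iii) explicit, and you verify joint (not merely separate) continuity of $\uplus$ through energy transforms. Your tightness detour is unnecessary, however: pointwise convergence of the Cauchy transforms on $\mathbb C^+$ to $G_{\mu_t}$, where $\mu_t$ is a probability measure, already forces weak convergence.
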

  \begin{proof}
  	
  	By assumption (iii),
  	
  	\[
  	\mu_{X_N^{(t)}+D_N^{(t)}}
  	\Longrightarrow
  	\mu_{X_N^{(t)}}\uplus
  	\mu_{D_N^{(t)}}.
  	\]
  	
  	Since weak convergence is assumed in (i) and (ii),
  	
  	\[
  	\mu_{X_N^{(t)}}
  	\Longrightarrow
  	\beta_t,
  	\qquad
  	\mu_{D_N^{(t)}}
  	\Longrightarrow
  	U_t.
  	\]
  	
  	The Boolean additive convolution is weakly continuous with respect to each
  	variable (see Franz \cite{Franz2009}). Therefore,
  	
  	\[
  	\mu_{X_N^{(t)}}\uplus
  	\mu_{D_N^{(t)}}
  	\Longrightarrow
  	\beta_t\uplus U_t.
  	\]
  	
  	By definition,
  	
  	\[
  	\beta_t\uplus U_t=\mu_t.
  	\]
  	
  	Hence
  	
  	\[
  	\mu_{X_N^{(t)}+D_N^{(t)}}
  	\Longrightarrow
  	\mu_t,
  	\]
  	
  	which completes the proof.
  	
  \end{proof}
  \begin{remark}[A candidate Boolean deformed Wigner model]
  	The family $\{\mu_t\}_{t>0}$ admits a natural candidate random matrix
  	realization analogous to the deformed Wigner model in free probability.
  	Indeed, let
  	\[
  	X_N^{(t)}=\frac{1}{\sqrt{N}}\bigl(x_{ij}\bigr)_{1\leq i,j\leq N},
  	\]
  	where $x_{ij}=x_{ji}$, the family
  	$\{x_{ij}:1\leq i\leq j\leq N\}$ is Boolean independent, each entry satisfies
  	\[
  	\phi(x_{ij})=0,\qquad
  	\phi(x_{ij}^2)=t,
  	\]
  	and all higher Boolean cumulants vanish. Such a matrix may be regarded as the
  	Boolean analogue of a Gaussian Wigner ensemble, whose limiting spectral
  	distribution is expected to be the centered Bernoulli law $\beta_t$.
  	
  	Next, consider the deterministic diagonal matrix
  	\[
  	D_N^{(t)}
  	=
  	\operatorname{diag}(d_1,\ldots,d_N),
  	\]
  	where
  	\[
  	d_k
  	=
  	-\frac{t}{2}
  	+
  	\frac{(k-1)t}{N-1},
  	\qquad
  	1\leq k\leq N.
  	\]
  	Then the empirical spectral distribution of $D_N^{(t)}$ satisfies
  	\[
  	\mu_{D_N^{(t)}}
  	=
  	\frac1N\sum_{k=1}^N\delta_{d_k}
  	\Longrightarrow
  	U_t.
  	\]
  	
  	Motivated by the deformed Wigner model in free probability, one may therefore
  	consider the matrix
  	\[
  	Y_N^{(t)}
  	=
  	X_N^{(t)}
  	+
  	D_N^{(t)}.
  	\]
  	Here the random part contributes the Boolean Gaussian component, whereas the
  	deterministic diagonal part contributes the uniform component. It is therefore
  	natural to expect that, under a suitable notion of asymptotic Boolean
  	independence for matrix ensembles, the empirical spectral distribution of
  	$Y_N^{(t)}$ converges weakly to
  	\[
  	\beta_t\uplus U_t=\mu_t.
  	\]
  	Establishing such a random matrix realization of the family
  	$\{\mu_t\}_{t>0}$ appears to be an interesting direction for future research.
  \end{remark}
  \section*{Further Properties}
  
  We conclude by examining an algebraic property of the family
  $\{\mu_t\}_{t>0}$. A natural question is whether the parameterization
  $\mu_t=\beta_t\uplus U_t$ gives rise to a Boolean convolution semigroup, as
  occurs for many distinguished families in noncommutative probability. The
  following theorem shows that this is not the case.
  \begin{theorem}\label{thm:nosemigroup}
  	Let
  	\[
  	\mu_t=\beta_t\uplus U_t,
  	\qquad t>0.
  	\]
  	Then there does not exist a function
  	\[
  	\Phi:(0,\infty)\times(0,\infty)\longrightarrow(0,\infty)
  	\]
  	such that
  	\[
  	\mu_s\uplus\mu_t=\mu_{\Phi(s,t)}
  	\]
  	for every \(s,t>0\).
  	
  	Consequently, the family
  	\[
  	\{\mu_t:t>0\}
  	\]
  	does not form a Boolean convolution semigroup.
  \end{theorem}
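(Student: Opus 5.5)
The plan is to argue by contradiction, comparing Boolean cumulants on both sides of the hypothetical identity. Since the Boolean energy transform linearizes $\uplus$, the relation $\mu_s\uplus\mu_t=\mu_{\Phi(s,t)}$ is equivalent to
\[
K_{\mu_s}(z)+K_{\mu_t}(z)=K_{\mu_{\Phi(s,t)}}(z),\qquad z\in\mathbb C^{+}.
\]
All three measures are compactly supported, so each side has a Laurent expansion at infinity. By Theorem~\ref{thm:booleancumulants}, comparing coefficients gives
\[
b_{2n}(s)+b_{2n}(t)=b_{2n}(\Phi(s,t))\qquad\text{for every } n\ge1.
\]

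I will use only the two lowest nontrivial cumulants. Write $r=\Phi(s,t)$. The second and fourth cumulants give
\[
s+\frac{s^2}{12}+t+\frac{t^2}{12}=r+\frac{r^2}{12},
\qquad
s^4+t^4=r^4,
\]
where the second equation uses $b_4(t)=-t^4/720$ and cancels the nonzero constant $a_2=-1/720$.

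The fourth-order equation forces $r=(s^4+t^4)^{1/4}$, since $r>0$. Substituting this into the second-order equation would then have to hold identically in $s,t>0$. It suffices to exhibit a single failing pair, and I take $s=t$. Then $r=2^{1/4}t$, and the second-order relation becomes
\[
2t+\frac{t^2}{6}=2^{1/4}t+\frac{\sqrt2\,t^2}{12}.
\]
Dividing by $t$ gives an affine identity in $t$. Its constant terms $2$ and $2^{1/4}$ differ, so it fails for all but at most one value of $t$; for instance it fails at $t=1$. This contradicts the existence of $\Phi$, even for a single pair $(s,t)$.

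The semigroup statement follows at once: a Boolean convolution semigroup would be a special case with $\Phi(s,t)=s+t$.

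The main point needing care is justifying coefficient comparison. The functions $K_{\mu}$ are analytic near infinity (on $|z|>R$ with $R$ large) and agree on $\mathbb C^+$. By the identity theorem they agree on the connected domain $\{|z|>R\}$, hence their Laurent coefficients coincide. For this I rely on the expansion from Theorem~\ref{thm:uniformexpansion}, together with $\log\frac{z+t/2}{z-t/2}$ being analytic for $|z|>t/2$.
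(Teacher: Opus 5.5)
Your proof is correct. It uses the same framework as the paper: additivity of Boolean cumulants under $\uplus$, solving one cumulant equation for $r=\Phi(s,t)$, then contradicting another. The difference is the choice of cumulants. The paper pairs $b_4$ with $b_6$, which gives the homogeneous system $r^4=s^4+t^4$, $r^6=s^6+t^6$, and concludes from strict monotonicity of $p\mapsto(s^p+t^p)^{1/p}$. You pair $b_2$ with $b_4$.

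Your choice has two advantages. It needs nonvanishing of only one higher coefficient, $a_2$. It also uses that $b_2(t)=t+t^2/12$ mixes the Bernoulli and uniform parts, so the contradiction is plain arithmetic with no inequality lemma. In fact the $b_2$ identity fails for every pair, not just $s=t=1$: $r^4=s^4+t^4$ forces $r<s+t$ and $r^2<s^2+t^2$, whence $r+r^2/12<(s+t)+(s^2+t^2)/12$.

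The paper's route uses only the pure-power cumulants $a_nt^{2n}$, but its inequality is stated backwards. For $s,t>0$ the map $p\mapsto(s^p+t^p)^{1/p}$ is strictly decreasing, so actually $(s^4+t^4)^{1/4}>(s^6+t^6)^{1/6}$. The strict inequality still gives the contradiction.

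It also helps that you use only $a_2\neq0$, because the quoted value $-1/720$ is not the actual fourth Boolean cumulant. Directly, $b_4(U_t)=m_4(U_t)-m_2(U_t)^2=t^4/80-t^4/144=t^4/180$, and $b_4(\beta_t)=0$, so $b_4(\mu_t)=t^4/180$. Since this is nonzero, your cancellation, and hence your proof, goes through unchanged.
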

  
  \begin{proof}
  	Suppose, to the contrary, that such a function
  	\(\Phi\) exists. Write
  	\[
  	r=\Phi(s,t).
  	\]
  	Then
  	\[
  	\mu_s\uplus\mu_t=\mu_r.
  	\]
  	
  	Since Boolean cumulants linearize Boolean convolution,
  	\[
  	b_n(\mu_s\uplus\mu_t)
  	=
  	b_n(\mu_s)+b_n(\mu_t),
  	\qquad n\ge1.
  	\]
  	
  	By Theorem~\ref{thm:booleancumulants},
  	\[
  	b_{2m}(u)
  	=
  	\frac{B_{2m}}{(2m)!}\,u^{2m},
  	\qquad m\ge2,
  	\]
  	where \(B_{2m}\neq0\), and all odd Boolean cumulants vanish.
  	
  	Therefore, for every integer \(m\ge2\),
  	\[
  	\frac{B_{2m}}{(2m)!}\,r^{2m}
  	=
  	\frac{B_{2m}}{(2m)!}\,s^{2m}
  	+
  	\frac{B_{2m}}{(2m)!}\,t^{2m},
  	\]
  	or equivalently,
  	\[
  	r^{2m}
  	=
  	s^{2m}+t^{2m},
  	\qquad m\ge2.
  	\]
  	
  	In particular,
  	\[
  	r=(s^4+t^4)^{1/4}
  	\]
  	and simultaneously
  	\[
  	r=(s^6+t^6)^{1/6}.
  	\]
  	Hence
  	\[
  	(s^4+t^4)^{1/4}
  	=
  	(s^6+t^6)^{1/6}.
  	\]
  	
  	Assume that \(s,t>0\).
  	The power mean inequality implies that the function
  	\[
  	p\longmapsto
  	(s^p+t^p)^{1/p}
  	\]
  	is strictly increasing for \(p>0\).
  	Therefore,
  	\[
  	(s^4+t^4)^{1/4}
  	<
  	(s^6+t^6)^{1/6},
  	\]
  	which contradicts the previous identity. Hence no such function \(\Phi\) exists. Therefore the family
  	\[
  	\{\mu_t:t>0\}
  	\]
  	is not closed under Boolean convolution, and thus does not form a Boolean convolution semigroup.
  \end{proof}
  Although the family $\{\mu_t\}_{t>0}$ does not form a Boolean convolution
  semigroup, it is natural to ask whether it belongs to any of the existing
  families that have been extensively studied in noncommutative probability.
  The following remarks indicate that its analytic and combinatorial features
  appear to distinguish it from several well-known classes, suggesting that it
  constitutes a genuinely new example.
  \begin{remark}[Relation to known families]
  	\label{rem:knownfamilies}
  	The family
  	\[
  	\mu_t=\beta_t\uplus U_t,
  	\qquad t>0,
  	\]
  	appears to be distinct from several well-known classes of probability measures arising in noncommutative probability.
  	
  	\begin{enumerate}
  		\item[(i)]
  		The family is not expected to belong to the Boolean Meixner class. Indeed, the Boolean cumulants satisfy
  		\[
  		b_{2}(t)=t+\frac{t^{2}}{12},
  		\]
  		and
  		for $n \geq 2$, \(b_{2n}(t)\) are given by equation~\ref{eq:cumofmut}. The appearance of these numbers and their alternating-sign asymptotics are incompatible with the recurrence structure of Boolean Meixner cumulants.
  		
  		\item[(ii)]
  		The reciprocal Cauchy transform
  		\[
  		F_{\mu_t}(z)
  		=
  		z+\frac{t}{z}
  		-
  		\frac{t}{\log\!\left(\dfrac{z+t/2}{z-t/2}\right)}
  		\]
  		contains logarithmic branch singularities at
  		\(\pm t/2\).
  		Consequently, it is transcendental and therefore differs fundamentally from the algebraic reciprocal Cauchy transforms associated with free Meixner distributions, Kesten distributions, and generalized arcsine laws.
  		
  		\item[(iii)]
  		In particular, the family
  		\(\{\mu_t:t>0\}\)
  		appears to constitute a new explicit one-parameter family of compactly supported symmetric probability measures possessing closed-form Boolean cumulants, an explicit reciprocal Cauchy transform, and two distinct scaling limits.
  	\end{enumerate}
  	
  	To the best of our knowledge a complete characterization of this family within the framework of noncommutative Meixner-type distributions remains an interesting open problem.
  \end{remark}
The explicit formulas and structural properties established in this paper
naturally lead to several further questions. We conclude with two open
problems that appear to be of independent interest.
\begin{openproblem}
	Determine whether the family
	\[
	\{\mu_t=\beta_t\uplus U_t:t>0\}
	\]
	belongs to a broader class of probability measures arising in
	noncommutative probability. In particular, characterize those compactly
	supported measures whose reciprocal Cauchy transforms have the form
	\[
	F_\mu(z)
	=
	z+\phi(z),
	\]
	where $\phi$ contains logarithmic singularities similar to those appearing in
	$\mu_t$. Such a characterization may lead to a new class of
	``logarithmic'' Boolean distributions extending the currently known
	Meixner-type families.
\end{openproblem}

\begin{openproblem}
	Investigate whether analogous explicit convolution families can be
	constructed for the other universal notions of independence. More
	precisely, determine the analytic, combinatorial, and asymptotic
	properties of the free, monotone, and anti-monotone convolutions
	\[
	\beta_t\boxplus U_t,\qquad
	\beta_t\triangleright U_t,\qquad
	\beta_t\triangleleft U_t,
	\]
	and compare their cumulants, transforms, support properties, and scaling
	limits with those obtained in the Boolean setting.
\end{openproblem}

\section*{Appendix}

The following computation is included for completeness, as it is used in the
derivation of the reciprocal Cauchy transform appearing in the main text.
While the resulting formula is elementary, the authors are not aware of an
explicit derivation in the literature for the present normalization. The proof
is therefore recorded here for the reader's convenience.
\begin{lemma}\label{prop:Cauthyofuniform}
	Let $U_t$ denote the uniform distribution on the interval
	$\left[-\frac{t}{2},\frac{t}{2}\right]$. Then its Cauchy transform is
	\[
	G_{U_t}(z)
	=
	\frac1t
	\log
	\left(
	\frac{z+t/2}{z-t/2}
	\right),
	\qquad
	z\in\mathbb{C}\setminus
	\left[-\frac{t}{2},\frac{t}{2}\right].
	\]
\end{lemma}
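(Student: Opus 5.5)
The plan is to compute the integral defining the Cauchy transform directly,
\[
G_{U_t}(z)=\frac1t\int_{-t/2}^{t/2}\frac{dx}{z-x},
\]
and then identify the resulting antiderivative with the principal branch of $\log\bigl(\frac{z+t/2}{z-t/2}\bigr)$. The only delicate point is the branch of the logarithm. The computation itself is routine.

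First I will justify the analytic setup. Fix $z\in\mathbb C\setminus[-t/2,t/2]$. The integrand $x\mapsto (z-x)^{-1}$ is continuous on the compact interval, so $G_{U_t}$ is defined there. By differentiation under the integral sign, $G_{U_t}$ is holomorphic on $\Omega:=\mathbb C\setminus[-t/2,t/2]$.

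Next I will show that the candidate $H(z):=\frac1t\log\bigl(\frac{z+t/2}{z-t/2}\bigr)$ is holomorphic on the same domain.
\begin{itemize}
\item The Möbius map $w(z)=\frac{z+t/2}{z-t/2}$ sends $\Omega$ into $\mathbb C\setminus(-\infty,0]$.
\item Indeed, $w(z)=\lambda$ with $\lambda\le 0$ real forces $z=\frac{t}{2}\frac{\lambda+1}{\lambda-1}$.
\item As $\lambda$ ranges over $(-\infty,0]$, this point sweeps out exactly $(-t/2,t/2]$; the value $\lambda=0$ corresponds to $z=-t/2$.
\item The point $z=t/2$ is the pole of $w$, which is also excluded from $\Omega$.
\end{itemize}
Hence $\log\circ w$ is holomorphic on $\Omega$ with the principal branch. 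This is the main obstacle, and I would handle it by the explicit inverse computation just described rather than by a case analysis of arguments.

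Then I will compare derivatives:
\[
H'(z)=\frac1t\left(\frac1{z+t/2}-\frac1{z-t/2}\right),
\qquad
G_{U_t}'(z)=-\frac1t\int_{-t/2}^{t/2}\frac{dx}{(z-x)^2}.
\]
The second integral evaluates to the same expression, since $\frac{d}{dx}\frac1{z-x}=\frac1{(z-x)^2}$.

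Since $\Omega$ is connected (the complement of a segment), $G_{U_t}-H$ is constant on $\Omega$. To identify the constant, I let $z\to\infty$ along the real axis. Both $G_{U_t}(z)\to0$ and $w(z)\to1$ hold there, so $H(z)\to\frac1t\log 1=0$. The constant is therefore $0$, which gives the formula on all of $\Omega$, and in particular on $\mathbb C^+$ as used in Proposition~\ref{prop:Ktrans}.
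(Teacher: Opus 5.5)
Your proof is correct, but it is organized differently from the paper's. The paper derives the formula directly. It takes $-\log(z-x)$ as an antiderivative in $x$, applies the fundamental theorem of calculus, and merges $\log(z+t/2)-\log(z-t/2)$ into $\log\frac{z+t/2}{z-t/2}$. It handles the branch question only by remarking that the principal branch is ``chosen'' so that $G_{U_t}$ maps $\mathbb{C}^+$ into the lower half-plane. That route is shorter and produces the answer rather than presupposing it, but it leaves two branch checks implicit. First, $x\mapsto -\log(z-x)$ must be differentiable along the whole segment: the segment $\{z-x : |x|\le t/2\}$ never crosses the cut, though for real $z<-t/2$ it runs along it. Second, the difference of principal logarithms must equal the principal logarithm of the quotient, which holds here only because the arguments of $z\pm t/2$ differ by less than $\pi$. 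Your verify-by-derivatives scheme avoids all of this. The only logarithm you differentiate is $\log\circ w$, whose holomorphy on $\mathbb{C}\setminus[-t/2,t/2]$ follows from your preimage computation. The integral you actually evaluate, $\int_{-t/2}^{t/2}(z-x)^{-2}\,dx$, has a rational antiderivative with no branch at all. Connectedness of the domain and the normalization at infinity then fix the constant, so you obtain the identity on the full domain stated in the lemma with every step justified. One small slip: as $\lambda$ runs over $(-\infty,0]$, the point $\frac t2\cdot\frac{\lambda+1}{\lambda-1}$ sweeps out $[-t/2,t/2)$, not $(-t/2,t/2]$. Your own remark that $\lambda=0$ gives $z=-t/2$, with $t/2$ the pole, shows this. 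The conclusion that $w$ maps $\mathbb{C}\setminus[-t/2,t/2]$ into $\mathbb{C}\setminus(-\infty,0]$ is unaffected.
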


\begin{proof}
	Since $U_t$ has density
	\[
	dU_t(x)=\frac1t
	\mathbf 1_{\left[-\frac t2,\frac t2\right]}(x)\,dx,
	\]
	its Cauchy transform is given by
	\[
	G_{U_t}(z)
	=
	\int_{\mathbb R}
	\frac1{z-x}\,
	dU_t(x)
	=
	\frac1t
	\int_{-t/2}^{t/2}
	\frac{dx}{z-x},
	\]
	for every
	$z\in\mathbb C\setminus
	[-t/2,t/2]$.
	
	It therefore remains to evaluate the integral
	\[
	I
	=
	\int_{-t/2}^{t/2}
	\frac{dx}{z-x}.
	\]
	
	Observe that
	\[
	\frac{d}{dx}
	\log(z-x)
	=
	-\frac1{z-x},
	\]
	and hence
	\[
	\int
	\frac{dx}{z-x}
	=
	-\log(z-x)+C.
	\]
	
	Applying the Fundamental Theorem of Calculus gives
	\begin{align*}
		I
		&=
		\Bigl[-\log(z-x)\Bigr]_{-t/2}^{t/2}\\
		&=
		-\log\!\left(z-\frac t2\right)
		+\log\!\left(z+\frac t2\right)\\
		&=
		\log
		\left(
		\frac{z+t/2}{z-t/2}
		\right),
	\end{align*}
	where the principal branch of the logarithm is chosen so that
	$G_{U_t}$ maps the upper half-plane into the lower half-plane.
	
	Substituting this into the definition of the Cauchy transform yields
	\[
	G_{U_t}(z)
	=
	\frac1t
	\log
	\left(
	\frac{z+t/2}{z-t/2}
	\right),
	\]
	which completes the proof.
\end{proof}
For completeness, we establish the Maclaurin expansion of the reciprocal function
\(
x/\operatorname{artanh}(x)
\),
which is required in the proof of Theorem~\ref{thm:uniformexpansion}. The result appears to be less readily available in the literature than the classical series for
\(
\operatorname{artanh}(x)
\),
so we include a short proof.
\begin{lemma}\label{lem:expansion}
	The function
	\[
	f(x)=\frac{x}{\operatorname{artanh}(x)}
	\]
	admits the Maclaurin expansion
	\[
	\frac{x}{\operatorname{artanh}(x)}
	=
	1-\frac{x^2}{3}
	-\frac{4x^4}{45}
	-\frac{44x^6}{945}
	-\frac{428x^8}{14175}
	-\frac{10196x^{10}}{467775}
	+O(x^{12}),
	\qquad |x|<1.
	\]
\end{lemma}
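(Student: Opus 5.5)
The plan is to obtain the expansion by inverting the elementary power series of $\operatorname{artanh}(x)/x$ term by term. I work first with formal power series and then check analyticity, so that the $O(x^{12})$ remainder is justified.

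\textbf{Step 1: the series to invert.} For $|x|<1$,
\[
\operatorname{artanh}(x)=\frac12\log\frac{1+x}{1-x}=\sum_{k\ge0}\frac{x^{2k+1}}{2k+1},
\]
so that
\[
g(x):=\frac{\operatorname{artanh}(x)}{x}=\sum_{k\ge0}\frac{y^{k}}{2k+1},\qquad y=x^2 .
\]
The function $g$ is analytic on the unit disc with $g(0)=1$. For real $x\in(-1,1)$ it is strictly positive, since every coefficient is positive. More generally, $\operatorname{artanh}$ is injective on the disc, being the inverse of $\tanh$ restricted to a strip, and it vanishes only at $0$. Hence $g$ has no zeros in $|x|<1$. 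It follows that $f=1/g$ is analytic on the whole unit disc, is even, and has a Maclaurin series $f(x)=\sum_{n\ge0}c_nx^{2n}$ converging there. The claimed identity with remainder $O(x^{12})$ then follows once the first six coefficients $c_0,\dots,c_5$ are identified.

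\textbf{Step 2: recurrence for the coefficients.} Comparing coefficients of $y^n$ in $f\cdot g=1$ gives $c_0=1$ and
\[
c_n=-\sum_{k=1}^{n}\frac{c_{n-k}}{2k+1},\qquad n\ge1.
\]
The first values are:
\begin{itemize}
\item[] $c_1=-\tfrac13$;
\item[] $c_2=-\bigl(\tfrac{c_1}{3}+\tfrac15\bigr)=-\bigl(-\tfrac19+\tfrac15\bigr)=-\tfrac4{45}$;
\item[] $c_3=-\bigl(\tfrac{c_2}{3}+\tfrac{c_1}{5}+\tfrac17\bigr)=-\tfrac{-28-63+135}{945}=-\tfrac{44}{945}$.
\end{itemize}
The same recursion produces
\[
c_4=-\Bigl(\frac{c_3}{3}+\frac{c_2}{5}+\frac{c_1}{7}+\frac19\Bigr),\qquad
c_5=-\Bigl(\frac{c_4}{3}+\frac{c_3}{5}+\frac{c_2}{7}+\frac{c_1}{9}+\frac1{11}\Bigr).
\]
These are to be evaluated over the common denominators $14175=3^4\cdot5^2\cdot7$ and $467775=3^5\cdot5^2\cdot7\cdot11$, with targets $-\tfrac{428}{14175}$ and $-\tfrac{10196}{467775}$.

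\textbf{Step 3: conclusion and main obstacle.} Truncating $\sum_n c_nx^{2n}$ after $n=5$ leaves an analytic tail of the form $x^{12}h(x)$ with $h$ analytic on the disc. This gives the stated expansion with remainder $O(x^{12})$.

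The only real work is the exact rational arithmetic for $c_4$ and $c_5$, where sign or denominator slips are easy. I would carry it out with exact fractions at each stage, clearing to the stated common denominators. As an independent check, I would compare against the known generating function
\[
\frac{x}{\operatorname{artanh}x}=1-\sum_{n\ge1}\frac{2|G_{2n}|}{2n-1}\,x^{2n}
\]
in terms of Gregory-type coefficients, or simply cross-multiply $f\cdot g$ through order $x^{10}$ to confirm that it equals $1$.
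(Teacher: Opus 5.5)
Your argument is correct and is essentially the paper's: both invert the series $1+\frac{x^2}{3}+\frac{x^4}{5}+\cdots$ of $\operatorname{artanh}(x)/x$, you via the recurrence from $f\cdot g=1$ and the paper via $\frac{1}{1+A}=\sum_j(-A)^j$. Your bookkeeping is safer, since the paper's displayed $x^8$ coefficient lists $-\frac1{15}$ twice even though its final value $-\frac{428}{14175}$ is right, and your zero-free argument for $\operatorname{artanh}(x)/x$ justifies the claim on all of $|x|<1$, which the paper's formal manipulation does not address. The arithmetic you left pending does close, since $c_4=-\frac{-220-252-675+1575}{14175}=-\frac{428}{14175}$ and $c_5=-\frac{-4708-4356-5940-17325+42525}{467775}=-\frac{10196}{467775}$; drop the Gregory-coefficient cross-check, which with the standard normalization $G_2=-\frac1{12}$ already fails at $n=1$ (it gives $\frac16$ instead of $\frac13$).
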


\begin{proof}
	From the well-known series expansion (see, e.g.,
	\cite[Eq.~(1.646.4)]{Gradshteyn2007})
	\[
	\operatorname{artanh}(x)
	=\sum_{k=0}^{\infty}\frac{x^{2k+1}}{2k+1}
	=x+\frac{x^3}{3}+\frac{x^5}{5}+\frac{x^7}{7}
	+\frac{x^9}{9}+\cdots,
	\qquad |x|<1,
	\]
	which is equivalent to
	\[
	\operatorname{arccoth}(x)
	=\sum_{k=0}^{\infty}\frac{x^{-(2k+1)}}{2k+1},
	\qquad |x|>1,
	\]
	upon substituting $x\mapsto 1/x$, we obtain
	\[
	\frac{x}{\operatorname{artanh}(x)}
	=
	\frac{1}
	{1+\dfrac{x^2}{3}
		+\dfrac{x^4}{5}
		+\dfrac{x^6}{7}
		+\dfrac{x^8}{9}
		+\cdots }.
	\]
	
	Let
	\[
	A=\frac{x^2}{3}
	+\frac{x^4}{5}
	+\frac{x^6}{7}
	+\frac{x^8}{9}
	+\cdots.
	\]
	Using the geometric series identity
	\[
	\frac{1}{1+A}
	=1-A+A^2-A^3+A^4-\cdots,
	\]
	and collecting like powers of $x$, we obtain
	\[
	\begin{aligned}
		\frac{x}{\operatorname{artanh}(x)}
		&=
		1-\frac{x^2}{3}
		+\left(-\frac15+\frac19\right)x^4
		+\left(-\frac17+\frac{2}{15}-\frac1{27}\right)x^6 \\
		&\quad
		+\left(-\frac19+\frac{2}{21}+\frac1{25}
		-\frac1{15}-\frac1{15}+\frac1{81}\right)x^8
		-\frac{10196}{467775}x^{10}
		+O(x^{12}),
	\end{aligned}
	\]
	which simplifies to
	\[
	\frac{x}{\operatorname{artanh}(x)}
	=
	1-\frac{x^2}{3}
	-\frac{4x^4}{45}
	-\frac{44x^6}{945}
	-\frac{428x^8}{14175}
	-\frac{10196x^{10}}{467775}
	+O(x^{12}).
	\]
	This completes the proof.
\end{proof}

	\end{document}